\documentclass[11pt, reqno]{article}

%\documentclass{article}
% \usepackage[small]{titlesec}
% \usepackage{etoolbox}
% \makeatletter
% \patchcmd{\ttlh@hang}{\parindent\z@}{\parindent\z@\leavevmode}{}{}
% \patchcmd{\ttlh@hang}{\noindent}{}{}{}
% \makeatother

\title{Cutoff in the Bernoulli-Laplace model with $O(n)$ swaps } 
\author{
Joseph S. Alameda\thanks{(alameda@usna.edu).} \and
Caroline Bang\thanks{(cbang@iastate.edu).} \and
Zachary Brennan\thanks{(brennanz@iastate.edu).} \and 
David P. Herzog\thanks{(dherzog@iastate.edu).}\and 
J{\"u}rgen Kritschgau\thanks{(jkritsch@andrew.cmu.edu).} \and
Elizabeth Sprangel\thanks{(sprangel@iastate.edu).}
}

\usepackage{amsfonts, amssymb, amsmath, amsthm,enumerate,esint,multicol, mathptmx, tikz}
\usepackage[colorlinks=true, pdfstartview=FitV, linkcolor=blue,
            citecolor=blue, urlcolor=blue]{hyperref}
%\usepackage[usenames]{color}
%\definecolor{Red}{rgb}{0.7,0,0.1}
%\definecolor{Green}{rgb}{0,0.7,0}
\definecolor{ourRed}{RGB}{255, 81, 81}
\usepackage[shortlabels]{enumitem}
\usepackage{amsmath}
\usepackage{accents}
\usepackage{comment}
\usepackage{graphicx}
\usepackage[capitalize,nameinlink,noabbrev]{cleveref}
\usepackage{physics} 
\usepackage{setspace}
\usepackage{geometry}

\pagestyle{myheadings}
\numberwithin{equation}{section}

\newtheorem{Theorem}{Theorem}[section]
\newtheorem{Proposition}[Theorem]{Proposition}
\newtheorem{Lemma}[Theorem]{Lemma}

\theoremstyle{definition}
\newtheorem{Definition}{Definition}[section]

\newtheorem{Assumption}{Assumption}
\newtheorem{Coupling}{Coupling}

\makeatother\newtheorem{Remark}[Theorem]{Remark}

\newcommand{\PP}{\mathbf{P}}

\newcommand{\N}{\mathbf{N}}
\newcommand{\Z}{\mathbf{Z}}
\newcommand{\cals}{\mathcal{S}}

\newcommand{\caly}{\mathcal{Y}}

\newcommand{\E}{\mathbf{E}}

\newcommand{\RR}{\mathbf{R}}

\newcommand{\calx}{\mathcal X}

\newcommand{\tmix}{t_{\text{mix}}}

\newcommand{\TV}[1]{||#1||_{TV}}

\begin{document}\maketitle

\begin{abstract}
This paper considers the $(n,k)$-Bernoulli--Laplace model in the case when there are two urns, the total number of red and white balls is the same, and the number of selections $k$ at each step is on the same asymptotic order as the number of balls $n$ in each urn.  Our main focus is on the large-time behavior of the corresponding Markov chain tracking the number of red balls in a given urn.  Under reasonable assumptions on the asymptotic behavior of the ratio $k/n$ as $n\rightarrow \infty$, cutoff in the total variation distance is established.  A cutoff window is also provided.  These results, in particular, partially resolve an open problem posed by Eskenazis and Nestoridi in~\cite{EN}.     
\end{abstract}

%\begin{enumerate}
%    \item Introduction 
%    \item Outline of the argument
%    \item Lower bound with relevant auxilary estimates
%    \item easy steps to get to $o(\sqrt{n})$
%    \item approximating by discrete normal 
%\end{enumerate}

\section{Introduction}

Throughout this paper, we study the \emph{$(n,k)$-Bernoulli-Laplace model}.  In the model, there are two urns, a \emph{left} urn and a \emph{right} urn, each of which contains exactly $n$ balls.  Of the total $2n$ balls contained in both urns, $n$ are colored red and $n$ are colored white.  Starting from a given coloration of $n$ balls in each urn, at each step $k$ balls are selected uniformly at random without replacement from each urn.  The selected balls are then swapped and placed in the opposite urn.  The process then repeats itself.  Letting $X_t$ denote the number of red balls in the left urn after $t$ swaps, the process $(X_t)$ is Markov.  Our main goal is to understand how long it takes for the chain to be within $\epsilon>0$ of its stationary distribution $\pi$ in total variation.  Our focus in this paper is on the case when the number of swaps $k$ is of order $n$ where $n\gg1$.  The main result of this paper partially resolves an open question posed by Eskenazis and Nestoridi in~\cite{EN}.

Our interest in the $(n,k)$-Bernoulli-Laplace model comes from shuffling large decks of cards.  Mapping the above model to this setting, the deck of cards has size $2n\gg1$ and at each step of the shuffle we cut the deck into two equal piles of $n$ cards, shuffle each deck independently and perfectly, reassemble the deck and then move the top $k$ cards to the bottom.  This process repeats itself until sufficient mixing is achieved.  From this description, it follows that the $(n,k)$-Bernoulli-Laplace model describes this card shuffling algorithm \emph{without} the separate step of shuffling each of the smaller decks independently and perfectly at each step.  See~\cite{NW_19} for further details.  

\subsection{Preliminaries}  Before discussing existing results in the literature and the results of this paper, we first fix some notation and terminology.  

Throughout, $\calx= \{0,1,2,\ldots, n\}$ denotes the state space of the $(n,k)$-Bernoulli-Laplace chain $(X_t)$, and $p_t(x,y)$ denotes the associated probability of transitioning from $x\in\calx$ to $y\in \calx$ in $t$ steps; that is,
\begin{align}
\label{eqn:trans}
p_t(x,y):= \PP_x \{ X_t =y \}
\end{align}  
where the subscript in the probability $\PP$ indicates that $X_0=x$.  One can write down the specific formulas for the transitions $p_t(x,y)$ (see~\cite{EN}) but these formulas are not particularly important in our analysis. Similar to $\PP_{X_0}\{ \cdots\}$, we use an analogous notation for the expectation $\E$; that is, $\E_{X_0}\{ \cdots \}$ indicates that $(X_t)$ has initial distribution $X_0$.  For any $A\subset \calx$ and $x\in \calx$, we let 
\begin{align}
P_t(x, A) := \sum_{y\in A} p_t(x,y) = \PP_x\{ X_t \in A\}
\end{align}
denote the probability of transitioning from state $x$ to the set $A$ in $t$ steps.  It is known (see~\cite{Tai_96}) that $(X_t)$ has a unique stationary distribution $\pi$ which is hypergeometric; specifically, $\pi$ satisfies
\begin{align}
\label{eqn:pidist}
\pi(\{x\})= \frac{\binom{n}{x} \binom{n}{n-x}}{\binom{2n}{n}}, \qquad x\in \calx. 
\end{align}  
Observe that each of the quantities above depends on the parameter $n\in \N$.  Throughout, unless otherwise specified (see, for example, the paragraph below), we will suppress this dependence.    

Let 
\begin{align}
d^{(n)}(t):= \max_{x\in \calx} \| P_t(x, \, \cdot \,) - \pi (\, \cdot \,) \|_{TV} = \tfrac{1}{2}\max_{x\in \calx} \sum_{y\in \calx} | p_t(x,y) - \pi(y)|,  
\end{align} 
and define for $\epsilon >0$ the \emph{mixing time} $\tmix^{(n)} (\epsilon)$ by 
\begin{align}
\tmix^{(n)}(\epsilon) = \min \{ t\in \N \, : \, d^{(n)}(t) \leq \epsilon \}. 
\end{align}
We say that the Markov chain $(X_t)$ exhibits \emph{cutoff} if  
\begin{align}
\label{eqn:cutoff}
\lim_{n\rightarrow \infty} \frac{\tmix^{(n)}(\epsilon)}{\tmix^{(n)}(1-\epsilon)} =1 \,\,\, \,\,\text{ for all }\, \,\,\, \, \epsilon \in (0, 1).  
\end{align}
If the Markov chain $(X_t)$ exhibits cutoff and for every $\epsilon \in (0,1)$ there exists a constant $c_\epsilon$ and a sequence $w_n$ satisfying   
\begin{align}
w_n=o(\tmix^{(n)}(1/2)) \qquad \text{ and } \qquad \tmix^{(n)}(\epsilon)- \tmix^{(n)}(1-\epsilon) \leq c_\epsilon w_n   \quad \text{ for all }\quad  n, 
\end{align}  
we say that $(X_t)$ has \emph{cutoff window} $w_n$.  For other preliminaries concerning mixing times of Markov chains, see~\cite{LPW_17}.

\subsection{Previous results and statement of the main result}
Existing results on mixing  times for the $(n,k)$-Bernoulli-Laplace model largely focus on the case when the number of swaps $k$ is \emph{much smaller} than the number of balls $n$ in each urn.  The earliest works of Diaconis and Shahshahani~\cite{DS_81, DS_87}  and Donnely, Floyd and Sudbury~\cite{DLS_94} treat the case when $k=1$ and establish cutoff in total variation and in the \emph{separation distance}, respectively.  Diaconis and Shahshahani proved their results by analyzing random walks on Cayley graphs of the symmetric group where edges correspond to transpostions. These results were extended to random walks on distance regular graphs in~\cite{B_98}.  See also~\cite{scarabotti1997time} for the Bernoulli-Laplace model with multiple urns in the case when $k=1$.  We refer to~\cite{school_02} which contains a signed generalization of the model.  

The case when the number of swaps $k>1$ in the Bernoulli-Laplace two-urn model was first studied by Nestoridi and White~\cite{NW_19}, where a number of estimates (not all sharp) were deduced for $\tmix(\epsilon)$ for general $k$.  These estimates were made sharp in the case when the number of swaps $k$ satisfies $k=o(n)$ in a joint paper of Eskenazis and Nestoridi~\cite{EN}, ultimately yielding the bound 
\begin{align}
\label{eqn:upperbksmall}
\frac{n}{4k}\log n - \frac{c(\epsilon) n}{k}\leq \tmix(\epsilon) \leq \frac{n}{4k} \log n + \frac{3n}{k}\log \log n + O\Big( \frac{n}{\epsilon^4 k}\Big).   
\end{align}
Thus, the model with $k=o(n)$ exhibits cutoff with cutoff window $\tfrac{n}{k} \log \log n$.  Estimates deduced in the case when $k=O(n)$ were not optimal; that is, in~\cite{NW_19} it was shown that if $k/n \rightarrow \lambda \in (0,1/2)$, then 
 \begin{align}
 \label{eqn:lowb1}
 \frac{\log n }{2 |\log (1-2\lambda)|} -c(\epsilon) \leq \tmix(\epsilon) \leq \frac{\log(n/\epsilon)}{2\lambda(1-\lambda)}.  
 \end{align} 
Moreover, it was conjectured in~\cite{EN} that the lower bound in~\eqref{eqn:lowb1} is sharp.  In general, it was left there as an open problem to determine the mixing time of the $(n,k)$-Bernoulli-Laplace model when $k/n\rightarrow \lambda \in (0,1/2)$.   In this paper, we make progress on solving this problem under \emph{reasonable} assumptions on the convergence rate $k/n\rightarrow \lambda \in (0,1/2)$, which we now describe.

%
%In this paper, we will see that under reasonable circumstances (cf. \cref{assump:1}) on the speed of convergence of $k/n \rightarrow\lambda \in (0,1/2)$, the lower bound~\eqref{eqn:lowb1} is indeed sharp and that the Markov chain exhibits cutoff at the threshold time 
%\begin{align}
%t_n :=  \frac{\log n }{2 |\log (1-2\lambda)|}.
%\end{align}     
%While, strictly speaking, we only need to improve upon the upper bound in~\eqref{}
%To be more precise, we now introduce the primary assumptions we make about the sequence $k/n$ as $n\rightarrow \infty$.   
\begin{Assumption}
\label{assump:1}
The parameter $k=k(n)$ in the $(n,k)$-Bernoulli-Laplace model satisfies the following conditions:
\begin{itemize}
\item[\textbf{(c0)}] $k/n\rightarrow \lambda \in (0, 1/2)$ as $n\rightarrow \infty.$  
\item[\textbf{(c1)}] There exists $\delta \in (0,1/2)$ for which $k/n\in (0, \delta)$ for all $n$.
\item[\textbf{(c2)}] $\Delta_n :=\tfrac{k}{n}- \lambda$ satisfies the asymptotic condition
\begin{align}
\Delta_n = o(1/\log n) \,\,\,\, \text{ as } \,\,\,\, n\rightarrow \infty. 
\end{align}
\end{itemize}

\begin{Remark}
\label{rem:1}
Although \cref{assump:1} is not explicitly employed in the paper~\cite{NW_19}, to the best of our knowledge it seems that one needs to impose some condition like the one above to make the asymptotic formulas previously used in~\cite{NW_19} to deduce a lower bound on $t_{\text{mix}}^{(n)}(\epsilon)$.  For further information on this point, we refer the reader to~\cref{rem:2}.   
\end{Remark}

\end{Assumption}

Throughout, using the notation in \cref{assump:1}, we define 
\begin{align}
\label{def:times}
s_n = \lambda^{-1} \log \log n \qquad \text{ and } \qquad t_n = \frac{\log n}{2| \log (1-2\lambda)|}.
\end{align}
Our main result is the following:
\begin{Theorem}
\label{thm:main}
Suppose that in the $(n,k)$-Bernoulli-Laplace chain, the parameter $k$ satisfies \cref{assump:1}.  
 For any $\epsilon \in (0,1)$, there exists a constants $c(\epsilon), N(\epsilon)>0$ depending only on $\epsilon$ such that 
\begin{align}
\label{eqn:upperb}
t_n - c(\epsilon) \leq \tmix^{(n)}(\epsilon) \leq t_n + 3s_n + 1 
\end{align}
for all $n\geq N(\epsilon)$. 
In particular, the $(n,k)$-Bernoulli-Laplace chain with parameter $k$ satisfying \cref{assump:1} has mixing time $t_n$ with cutoff window $s_n.$
\end{Theorem}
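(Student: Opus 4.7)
The plan is to prove the lower and upper bounds in \cref{thm:main} separately, both anchored on a single observation: the centered linear function $g(x) := x - n/2$ is an eigenfunction of the one-step Markov operator with eigenvalue $1 - 2k/n$. Starting from $X_0 = x$, the numbers of red balls drawn from the left and right urns are hypergeometric with means $kx/n$ and $k(n-x)/n$, and a short computation gives $\E_x[g(X_1)] = (1 - 2k/n)\, g(x)$, whence by iteration $\E_x[g(X_t)] = (1 - 2k/n)^t g(x)$. Throughout, \cref{assump:1}(c2) guarantees that $\tfrac{\log n}{2|\log(1 - 2k/n)|} = t_n + o(1)$, so I may work with the exact ratio $k/n$ in all intermediate estimates and replace it by $\lambda$ only at the end.

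For the lower bound, I would run a Chebyshev-style second-moment argument started from the extremal configuration $X_0 = n$. Under the stationary hypergeometric $\pi$, direct calculation gives $\E_\pi g = 0$ and $\Var_\pi g = \tfrac{n^2}{4(2n-1)} = O(n)$, while under $\PP_n$ the mean equals $(n/2)(1-2k/n)^t$ and an induction on $t$ using the per-step conditional variance identity keeps $\Var_n g(X_t) = O(n)$. When $t \le t_n - c(\epsilon)$ with $c(\epsilon)$ sufficiently large, $\E_n g(X_t) \ge C_\epsilon \sqrt{n}$, so the half-line $A_t = \{y : g(y) \le \tfrac12 \E_n g(X_t)\}$ has $\pi(A_t) \ge 1 - \tfrac{\epsilon}{2}$ and $P_t(n, A_t) \le \tfrac{\epsilon}{2}$ by Chebyshev, yielding $\|P_t(n,\cdot) - \pi\|_{TV} \ge 1 - \epsilon$.

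For the upper bound, I would begin with the monotone coupling of two copies $(X_t),(Y_t)$ with $X_0 \ge Y_0$: canonically order each urn as \emph{shared red / extra red / white} and select the same $k$ positions in each urn for both chains. This preserves $D_t := X_t - Y_t \ge 0$ and yields $\E[D_{t+1} \mid D_t] = (1 - 2k/n)\, D_t$ with conditional variance of order $D_t$. On its own, Markov gives only $\tmix \le 2 t_n$, a factor of two too large. To upgrade to $t_n + 3 s_n + 1$ the plan is to pair the coupling with a moment/local-CLT argument: after roughly $t_n + s_n$ steps, use concentration (beyond the mean decay) to show $X_t$ is close in total variation to a Gaussian with the correct mean and with variance matching $\Var_\pi g$ up to $o(1)$, and then bound the total variation between that Gaussian and $\pi$ by the residual mean gap $(n/2)(1-2k/n)^t$ divided by $\sqrt{\Var_\pi g} \asymp \sqrt{n}$; this becomes $O(\epsilon)$ once $t \ge t_n + 3 s_n$, with the trailing $+1$ absorbing an integer rounding.

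The main obstacle will be closing the factor-of-two gap between the naive monotone-coupling bound ($\sim 2 t_n$) and the sharp target $t_n + O(s_n)$. Closing this gap inherently requires second-order information, either via an $L^2$/chi-square bound invoking the full spectral decomposition of the Bernoulli--Laplace chain (whose eigenfunctions are the dual Hahn polynomials), with careful pointwise estimates at the extremal state $x=n$, or via a sufficiently sharp local central limit theorem for $X_t$ controlling moments of all orders. In either route the technical heart is a uniform control of the higher-order contributions over the range of $k/n$ permitted by \cref{assump:1}(c1); \cref{assump:1}(c2) is invoked a second time here so that $k/n$-dependent constants may be replaced by their $\lambda$-limits without incurring errors of order one over the $\log n$-scale mixing window.
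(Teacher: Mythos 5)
Your lower bound is essentially the paper's argument: a second-moment/Chebyshev bound using the linear eigenfunction, with the first and second (dual Hahn) eigenfunctions supplying the needed mean and variance decay (the paper starts from $X_0=0$ rather than $X_0=n$, but this is immaterial by symmetry). One technical point you should not gloss over: to extract $t_n$ with a $\lambda$ (not $k/n$) appearing in the constant, the paper needs \cref{assump:1}(c2) to justify $(1-2k/n)^t = (1-2\lambda)^t(1+o(1))$ for $t$ of order $t_n$ (\cref{lem:2}); your phrase ``replace it by $\lambda$ only at the end'' conceals a real estimate, not a triviality.

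The upper bound is where your proposal has a genuine gap. You correctly identify the factor-of-two obstruction from the naive contraction bound, but neither of the two repairs you float is what the paper does, and neither is adequate as sketched. The $L^2$/chi-square route you mention does not yield the sharp constant $t_n$ in this regime (indeed, Eskenazis--Nestoridi already abandoned it in the easier $k=o(n)$ case for exactly this reason). The local-CLT route you describe --- showing $X_t$ itself is close in total variation to a Gaussian and then comparing that Gaussian to $\pi$ --- requires TV-level control of the marginal law of $X_t$ uniformly in $t$, which does not follow from the first- and second-moment identities you have available; ``concentration beyond the mean decay'' is not a proof that a discrete distribution is TV-close to a Gaussian. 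What the paper actually does is quite different and avoids this: it couples two copies of the chain and works entirely with the pair. In $t_n$ steps both copies are within $O(\sqrt{n})$ of $n/2$ (Chebyshev on the eigenfunctions); in a further $s_n$ steps the Bubley--Dyer path-coupling bound (Proposition~\ref{prop:couple}, with rate $1-2k(n-k)/n^2 \to 1-2\lambda+2\lambda^2$, \emph{not} $1-2k/n$) brings $|X_t-Y_t|$ to $o(\sqrt{n})$, at the cost of a polylog blow-up in the distance from $n/2$; $2s_n$ more steps of the second-moment martingale argument re-centers both copies into the $O(\sqrt{n})$ band while preserving $|X_t-Y_t|=o(\sqrt{n})$; and then a single final step shows $\TV{P_1(X_0,\cdot)-P_1(Y_0,\cdot)}=o(1)$ by comparing the hypergeometric one-step laws to a \emph{discrete normal} distribution in total variation (Proposition~\ref{hyper} and Lemma~\ref{normal}), a comparison built from the Berry--Esseen-type estimates of \cite{LCM_07} that are valid precisely because $k/n$ is bounded away from $0$ and $1/2$. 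That last step is the technical heart of the result and is entirely absent from your proposal. Your monotone-coupling identity $\E[D_{t+1}\mid D_t]=(1-2k/n)D_t$ is true but gives the wrong rate for the step that matters; it is the path-coupling rate, combined with the fact that you only need to travel from $O(\sqrt n)$ to $o(\sqrt n)$ rather than from $n$ to $o(1)$, that buys the $+3s_n$ window, and it is the discrete-normal comparison that closes the final gap from $o(\sqrt n)$ to exact coincidence.
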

%
%Perhaps the most notable contribution of this paper in terms of the proof of \cref{thm:main} is in the very last step of the argument, which says that once two copies of the chain are within a certain distance of each other, the chains mix in one step.  Different from~\cite{EN}, in the regime where $k$ is order $n$, we cannot rely on an existing result of Diaconis and Freedman~\cite{DF_1980}.  This is because we cannot use the binomial approximation of the hyppreviously employed in~\cite{EN} because    

\section{Proof outline }
\label{sec:outline}

\subsection{The lower bound}
The proof of the lower bound with $k$ satisfying \cref{assump:1} was first estabished in \cite{NW_19}.  
We present the lower bound argument and fill in some details in \cref{sec:lowerb} for completeness. 
The lower bound argument follows the standard idea of finding a set $A\subseteq \calx$ where $|P_t (0,A)-\pi_n(A)|$ is large. In particular, the argument uses the estimate 
\[\TV{P_t(0, \, \cdot\,)-\pi_n(\, \cdot \,)} \geq \sup_{A\subseteq \mathcal X} |P_t(0,A)-\pi_n(A)|.\]
It is worth noting that the lower bound argument uses Chebyshev's inequality to bound the $P_t(0,A)$ from above and to bound $\pi_n(A)$ from below, for a clever choice of $A$. 
This argument is instructive and provides intuitive support for the lower bound being sharp since the application of Chebychev's inequality fails to bound $P_t(0,A)$ and $\pi_n(A)$ away from each other for $t\geq t_n$.
Our main contribution in this paper is the upper bound~\eqref{eqn:upperb}. 

\subsection{The upper bound~\eqref{eqn:upperb}}

While the broad structure of the argument giving the upper bound~\eqref{eqn:upperb} is similar to that used to obtain the upper bound in~\eqref{eqn:upperbksmall} in the case when $k=o(n)$~\cite{EN}, the details giving these results are quite different.  This is especially true because the asymptotics are more delicate when $k$ is order $n$ as in~\cref{assump:1}.  Furthermore, we cannot rely on a known result of Diaconis and Freedman~\cite{DF_1980} in the last step of the proof.  This was used crucially in~\cite{EN} to show that once two copies of the chain are sufficiently close, in this case at distance $o(k)$, then in one step the processes are distance $o(1)$ apart with high probability.  This step relies inherently the binomial approximation of the hypergeometric distribution (see, for example,~\cite{R_01}), which is not valid in our regime of interest since $k$ is order $n$.  The last step, i.e. going from distance $o(k)$ to $o(1)$, is arguably the most involved because we manufacture a \emph{discrete normal} approximation (in total variation) of the hypergeometric.  This same approximation was also used in~\cite{LCM_07} to establish a Berry-Esseen theorem for the hypergeometric distribution in nonstandard parameter cases.  For more on Barry-Esseen theorems, we refer the reader to Feller's book~\cite{F_08}.  All of these steps are carried out in detail in \cref{sec:osmallrootn} and \cref{sec:upperbound}.

\subsubsection{The coupling}        
The proof of the upper bound~\eqref{eqn:upperb} employs coupling methods.  Specifically, recall that if $\mu$ and $\nu$ are probability measures defined on all subsets of $\calx$, then we can write
\begin{align}
\| \mu - \nu \|_{TV} = \inf \{ \PP\{ X\neq Y \} \, : \, (X,Y) \text{ is a coupling of } \mu \text{ and } \nu \}.  
\end{align}
Here we recall that $(X,Y)$ is a \emph{coupling} of the measures $\mu$ and $\nu$ if $X$ and $Y$ are random variables on a common probability space $(\Omega, \mathcal{F}, \PP)$ with $X\sim \mu$ and $Y\sim \nu$.  Thus in order to achieve the claimed upper bound~\eqref{eqn:upperb}, we will utilize a particular coupling of two copies of the chain to bound the total variation distance from above. 

Recall that in the $(n,k)$-Bernoulli-Laplace model, $X_t$ denotes the number of red balls in the left urn at time $t$.  Thus if $(X_t)$ and $(Y_t)$ are two copies of the $(n,k)$-Bernoulli-Laplace chain, then $X_t$ and $Y_t$ are the number of red balls in two separate left urns at time $t$.  The coupling we use can be described as follows (see also~\cite{EN, NW_19}):   
%Let $\mu$ be a probability measure of $(\Omega, \mathcal F)$ and $\nu$ be a probability measure of $(\Omega',\mathcal F')$. 
%A coupling $\mathcal C$ of $\mu$ and $\nu$ is a probability distribution on $(\Omega\times \Omega', \mathcal F\times \mathcal F')$ such that for each $A\in \mathcal F$ and $B\in \mathcal F'$, we have 
%\[\mathcal C(\{(x,y):x\in A\})= \mu(A)\quad \text{and}\quad \mathcal C(\{(x,y):y\in B\})= \nu (B).\]
%
%Recall that $X_t$ and $Y_t$ represent the number of red balls in the left urn in the Bernoulli-Lapalace urn model. 
%We can treat $X_t$ and $Y_t$ as probability distributions for $X_{t+1}$ and $Y_{t+1}$. 

\begin{Coupling}\label{coupling}
Let $(X_t)$ and $(Y_t)$ be two copies of the $(n,k)$-Bernoulli-Laplace chain and fix a time $t\geq 0$.  Given the distributions $X_t$ and $Y_t$ at time $t$, we define a coupling of $X_{t+1}$ and $Y_{t+1}$.  First, label the balls in the two left urns at time $t$ separately from $1$ to $n$ so that each red ball has a smaller label than each white ball. 
Furthermore, in a similar way label all of the balls in the two right urns from $n+1$ to $2n$ so that each red ball has a smaller label than each white ball. 
Uniformly and independently select subsets $A\subseteq \{1,\dots,n\}$ and $B\subseteq \{n+1,\dots, 2n\}$ with $|A|=|B|=k$.  To obtain $X_{t+1}$ and $Y_{t+1}$,  
swap the balls indexed by the elements of $A$ in each left urn with the balls in the corresponding right urn with index belonging to $B$.  \end{Coupling}

 %%%%%%%%%%%%% Figure 1 %%%%%%%%%%%%%%%%%%%%%%%%

\begin{figure}
    \centering
    \begin{tikzpicture}[thick, scale = 1]
    
      %\drawing the urns and balls
    \foreach \x in {0, 3, 7, 10}{
        \foreach \y in {0, 2, 4, 6}{
        \draw (\x -1, \y) arc [ start angle = 180, end angle = 360, x radius =1, y radius = .25];
        }
        \foreach \y in {2,6}{
        \draw (\x + 1, \y) arc [ start angle = 0, end angle = 180, x radius =1, y radius = .25];}
        \foreach \y in {0,4}{
        \draw[dashed] (\x + 1, \y) arc [ start angle = 0, end angle = 180, x radius =1, y radius = .25];
        \draw (\x - 1, \y ) -- (\x - 1, \y + 2);
        \draw (\x + 1, \y ) -- (\x + 1, \y + 2);
        \foreach \z in {.2,.8, 1.4}{
            \draw[fill=white] (\x -.4, \y + \z) circle (.25);}
        \foreach \z in {.5,1.2}{
            \draw[fill=white] (\x +.4, \y + \z) circle (.25);}
            }
            }
        
    %coloring 
    \foreach \z in {.2, .8, 1.4}{
        \draw[fill = ourRed] (-.4, 4 + \z) circle (.25);}
    \foreach \z in {1.4, .8}{
        \draw[fill = ourRed] (2.6, 4 + \z) circle (.25);}
    \foreach \x in {0,7}{
        \draw[fill = ourRed] (\x - .4, 1.4) circle (.25);}
    \draw[fill = ourRed] (6.6, 5.4) circle (.25);
    \foreach \x in {3,10}{
        \foreach \z in {.2, .8, 1.4}{
            \draw[fill = ourRed] (\x - .4, \z) circle (.25);}
        \draw[fill = ourRed] (\x + .4, 1.2) circle (.25);}
    \foreach \z in {.2, .8, 1.4}{
        \draw[fill = ourRed] (9.6, 4 + \z) circle (.25);}
    \draw[fill = ourRed] (10.4, 5.2) circle (.25);

    %labeling
    %labeling balls
    \foreach \y in {0,4}{
        \foreach \z/\i in {1.4/1, .8/2, .2/3}{
            \draw ( - .4, \y + \z) node {\i};}
        \foreach \z/\i in {1.2/4, .5/5}{
            \draw (.4, \y + \z) node {\i};}
            }
    \foreach \y in {0,4}{
            \foreach \z/\i in {1.4/6, .8/7, .2/8}{
                \draw ( 2.6, \y + \z) node {\i};}
            \foreach \z/\i in {1.2/9, .5/10}{
                \draw (3.4, \y + \z) node {\i};}
                }
            
    %labeling states
    \draw node [below] at (1.5, 3.5) {$X_t$};
    \draw node [below] at (1.5, -.5) {$Y_t$};
    \draw node [below] at (8.5, 3.5) {$X_{t+1}$};
    \draw node [below] at (8.5, -.5) {$Y_{t+1}$};
    
    %\labels urns
    \foreach \y in {0,4}{
        \foreach \x in {0, 7}{
            \draw (\x, \y - .5) node(\x\y){$L$};
        }
        \foreach \x in {3, 10}{
            \draw (\x, \y - .5) node(\x\y){$R$};
        }}
    
    \end{tikzpicture}
    \caption{An illustration of \cref{coupling} with $n=5$, $k=2$ and $A=\{1,3\}$ and $B=\{9,10\}$.}
    \label{fig:1}
\end{figure}
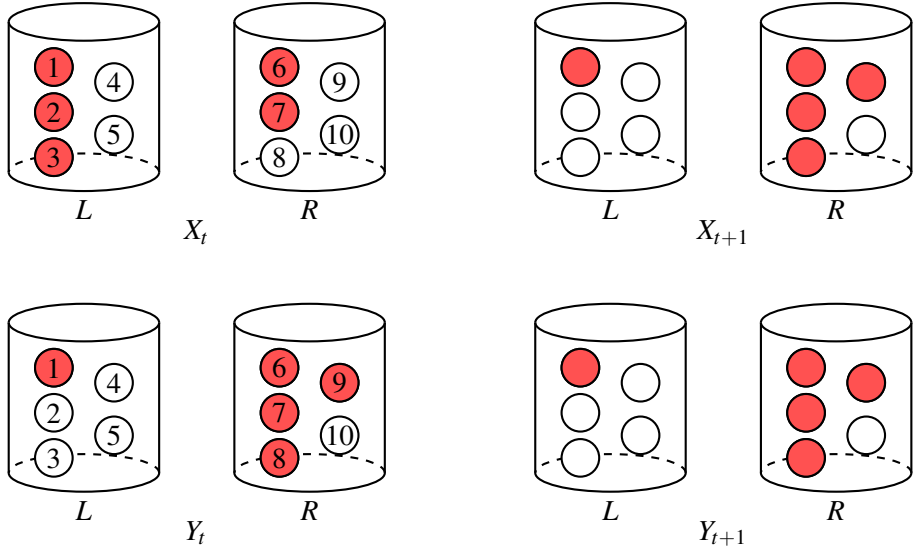

%%%%%%%%%%%%%%%%%%%%%%%%%%%%%%%%%%%%%%%%%%

 Refer to \cref{fig:1} for an illustration of this coupling.  Importantly, two chains coupled as in \cref{coupling} have the property that $|X_{t+1}-Y_{t+1}|\leq |X_t - Y_t|$~\cite{NW_19}. 
Furthermore, we have the next proposition (see~\cite[Proposition 2]{EN}) whose argument is a  modification of the \emph{path coupling theorem} of Bubley and Dyer~\cite{BD_97}.   

\begin{Proposition}\label{prop:couple}
Let $(X_t)$ and $(Y_t)$ be two instances of the $(n,k)$-Bernoulli-Laplace chain coupled at each time $t\geq 1$ as in \cref{coupling} starting from $X_0, Y_0 \in \calx$. For $r>0$, let 
\begin{align}
\label{eqn:tcouple}
\tau_{\emph{couple}}(r) := \min\{t\,:\, |X_t-Y_t|\leq r\}.
\end{align}
Then, for every $t\in \mathbb{N}$, 
\[\PP(\tau_{\emph{couple}}(r)>t)\leq \Big( 1-\frac{2k(n-k)}{n^2}\Big)^t\frac{|X_0-Y_0|}{r}.\]

\end{Proposition}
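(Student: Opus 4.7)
\textbf{Plan for the proof of \cref{prop:couple}.}  The plan is to follow the path coupling strategy of Bubley--Dyer~\cite{BD_97}, reducing the analysis to the contraction of the expected distance when $|X_t - Y_t|=1$ and then iterating. The key observation is that under \cref{coupling} the distance $|X_t - Y_t|$ is nonincreasing in $t$, so that
\begin{align*}
\PP(\tau_{\text{couple}}(r) > t) = \PP(|X_t - Y_t| > r) \le \frac{\E |X_t - Y_t|}{r}
\end{align*}
by Markov's inequality. It therefore suffices to show that
\begin{align}\label{eqn:contr}
\E\!\left[|X_{t+1}-Y_{t+1}|\,\big|\,X_t,Y_t\right] \le \Big(1-\tfrac{2k(n-k)}{n^2}\Big)\,|X_t - Y_t|,
\end{align}
after which the bound claimed in the proposition follows by induction in $t$.

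First I would establish~\eqref{eqn:contr} in the special case $|X_t - Y_t|=1$, say $X_t = Y_t + 1$. Using the labelling prescribed in \cref{coupling}, the $X$- and $Y$-systems differ in the color of exactly two balls: ball $X_t$ in the left urn (red for $X$, white for $Y$) and ball $2n-Y_t$ in the right urn (white for $X$, red for $Y$). Writing the number of reds swapped into and out of each urn in terms of these two differing labels and the common set $A\cap\{1,\dots,X_t-1\}$ and $B\cap\{n+1,\dots,2n-X_t\}$, a direct computation gives
\begin{align*}
X_{t+1}-Y_{t+1} = 1 - \indFn{\{X_t \in A\}} - \indFn{\{2n-Y_t \in B\}},
\end{align*}
whose absolute value is $0$ precisely when the two indicators take different values. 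Since $A$ and $B$ are independent and uniform with $|A|=|B|=k$, this happens with probability $2\cdot \tfrac{k}{n}\cdot(1-\tfrac{k}{n}) = \tfrac{2k(n-k)}{n^2}$, which yields~\eqref{eqn:contr} when $|X_t - Y_t|=1$.

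Next I would bootstrap to general $|X_t - Y_t|$ by the path coupling argument. For arbitrary $X_t,Y_t\in\calx$, construct a chain of intermediate states $X_t = z_0, z_1,\dots, z_\ell = Y_t$ with $\ell = |X_t - Y_t|$ and $|z_{i+1}-z_i|=1$, and couple consecutive pairs $(z_i, z_{i+1})$ for one step using \cref{coupling} with a \emph{common} pair $(A,B)$. Because \cref{coupling} uses the same uniform pair $(A,B)$ to drive both chains, this extension is consistent and, by the $|X_t-Y_t|=1$ calculation plus the triangle inequality,
\begin{align*}
\E\!\left[|X_{t+1}-Y_{t+1}|\,\big|\,X_t,Y_t\right] \le \sum_{i=0}^{\ell-1} \E\!\left[\,|z_{i+1}^{(1)} - z_i^{(1)}|\,\big|\,X_t,Y_t\right] \le \Big(1-\tfrac{2k(n-k)}{n^2}\Big)\ell,
\end{align*}
where $z_i^{(1)}$ denotes the one-step image of $z_i$. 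Iterating~\eqref{eqn:contr} in $t$ and applying Markov's inequality then gives the claimed bound.

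The main obstacle is the one-step contraction at distance one: it requires a careful bookkeeping of the labellings so that one can verify both that the only discrepancy between the $X$- and $Y$-systems reduces to the two indicators $\indFn{\{X_t\in A\}}$ and $\indFn{\{2n-Y_t\in B\}}$, and that their independence gives exactly the factor $\tfrac{2k(n-k)}{n^2}$. Once this combinatorial check is in place, the rest is a routine application of path coupling.
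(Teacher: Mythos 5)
Your proof is correct and follows precisely the path-coupling strategy that the paper invokes (the paper itself gives no proof, citing~\cite[Proposition~2]{EN} and describing the argument as a modification of Bubley--Dyer). The key distance-one computation is right: with $X_t = Y_t + 1$, the labelling in \cref{coupling} makes the two configurations agree except at label $X_t$ in the left urn and label $2n-Y_t$ in the right urn, and a careful bookkeeping of the number of reds moved out of and into each left urn indeed yields $X_{t+1}-Y_{t+1} = 1 - \indFn{\{X_t\in A\}} - \indFn{\{2n-Y_t\in B\}}$, so that the distance collapses to zero precisely when exactly one indicator fires, with probability $2\cdot\tfrac{k}{n}\cdot\tfrac{n-k}{n}$ by the independence of $A$ and $B$. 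The path-coupling extension via intermediate states driven by the common $(A,B)$ is valid because \cref{coupling} is a grand coupling (the next state is a deterministic function of the current state and $(A,B)$), so the triangle inequality gives the general contraction, and Markov's inequality together with the monotonicity of $t\mapsto|X_t-Y_t|$ (which your distance-one computation also recovers) finishes the proof. One small presentational point: the bound you write, $\E[|X_{t+1}-Y_{t+1}|\mid X_t,Y_t]\le(1-\tfrac{2k(n-k)}{n^2})|X_t-Y_t|$, is actually an equality at distance one, and remains so for all distances because the intermediate steps in the telescoping sum all have the same sign; this is not needed for the upper bound but is a cleaner way to see there is no slack in the contraction rate.
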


\subsubsection{The steps in the upper bound argument}
In \cref{sec:osmallrootn} and~\cref{sec:upperbound}, we provide the details for the upper bound~\eqref{eqn:upperb}.  Here we lay out the argument giving the upper bound in several steps.  

%Throughout, given initial distributions $X_0, Y_0$, we assume that at each time $t\geq 1$, $(X_t)$ and $(Y_t)$ are two copies of the $(n,k)$-Bernoulli-Laplace chain coupled according to~\cref{coupling} with initial distributions $X_0$ and $Y_0$, respectively.  Although, strictly speaking, we only need this coupling in two of the steps discussed below (in particular in \textbf{Step 2} and \textbf{Step 3}), whether we have it or not makes no difference in the other steps because the estimates hold regardless of the coupling structure.      

\textbf{Step 1}.  Recalling $t_n$ as in~\eqref{def:times} and fixing $\kappa_1\gg 1$, we will  first prove that with high probability, there exists $t\leq t_n$ such that 
\begin{align*}
X_{t}, Y_{t} \in (\tfrac{n}{2}- \kappa_1 \sqrt{n}, \tfrac{n}{2}+ \kappa_1 \sqrt{n}). 
\end{align*} 
As a consequence, by $t_n$ steps, with high probability (depending on the choice of $\kappa_1$) the chains have been at distance order $\sqrt{n}$ apart.   This step will be achieved using the first and second eigenfunctions for the chain, which translates to precise first and second moment estimates for $(X_t)$.      

\textbf{Step 2}.  Recalling $s_n$ as in~\eqref{def:times} and fixing $\kappa_2 , \kappa_3\gg 1$, we will show that if $X_0, Y_0 \in (\tfrac{n}{2}-\kappa_2\sqrt{n},\tfrac{n}{2}+ \kappa_2 \sqrt{n})$, then with high probability (depending on $\kappa_2, \kappa_3$) if $X_t, Y_t$ are coupled according to \cref{coupling} there exists $t \leq s_n$ so that 
\begin{align*}
|X_t - Y_t| \leq \frac{\sqrt{n}}{\log \log n} \qquad \text{ and } \qquad X_t, Y_t \in (\tfrac{n}{2}- \kappa_3 \sqrt{n} (\log n)^{|P_\lambda|/2}, \tfrac{n}{2}+ \kappa_3 \sqrt{n} (\log n)^{|P_\lambda|/2})\end{align*}
where $P_\lambda =\log(1-2\lambda)  \tfrac{2}{\lambda}$.  Thus the chains become closer asymptotically but they may deviate slightly from $\tfrac{n}{2}$ by more than order $\sqrt{n}$.  Note that this deviation becomes particularly pronounced when $\lambda \approx 1/2$ by the definition of $P_\lambda$.     

\textbf{Step 3}.  In this step, we correct for the deviation in \textbf{Step 2} from $\tfrac{n}{2}$ in $2 s_n$ steps again making use of~\cref{coupling}.  That is, fix $\kappa_4 \gg 1$ and suppose $X_0, Y_0$ satisfy
\begin{align*}
|X_0 - Y_0| \leq \frac{\sqrt{n}}{\log \log n} \qquad \text{ and } \qquad X_t, Y_t \in (\tfrac{n}{2}- \kappa_3 \sqrt{n} (\log n)^{|P_\lambda|/2}, \tfrac{n}{2}+ \kappa_3 \sqrt{n} (\log n)^{|P_\lambda|/2}).\end{align*}
Supposing that the chains $(X_t)$ and $(Y_t)$ are coupled according to~\cref{coupling}, we will show that with high probability (depending on $\kappa_4 \gg 1$) there exists $t\leq 2 s_n$ so that 
\begin{align}
\label{eqn:step3}
|X_t - Y_t| \leq \frac{\sqrt{n}}{\log \log n} \qquad \text{ and } \qquad X_t, Y_t \in (\tfrac{n}{2}- \kappa_4 \sqrt{n} , \tfrac{n}{2}+ \kappa_4 \sqrt{n} ). \end{align}

\textbf{Step 4}.  At this point, by using the strong Markov property and combining \textbf{Step 1}, \textbf{Step 2} and \textbf{Step 3}, with high probability there exists $t\leq t_n + 3 s_n$ such that~\eqref{eqn:step3} is satisfied regardless of $X_0$, $Y_0$. From this point, we will see that in one additional step the processes are within distance $o(1)$ of each other with high probability.   This step is more involved than the other steps and constitutes its own section (cf. \cref{sec:upperbound}).  The main reason this step is more involved is that we have to understand the hypergeometric distributions that make up the one-step behavior of the chain within this  particular parameter range on the initial data.  Here, because $k$ is order $n$ instead  of $k=o(n)$, we cannot appeal to an existing result to do so.  Instead, we make use of some asymptotic estimates for the hypergeometric distributions  in~\cite{LCM_07} to compare the hypergeometrics with what we call \emph{discrete normal} distributions.  

%    \item Then, with high probability, there exists $t\leq t_0+ t_1$ such that $|X_{t}-Y_{t}|\leq \sqrt{n}/\log\log n$ and \[X_{t}, Y_{t}\in \left(\tfrac{n}{2}-\kappa_3 \sqrt{n}(\log n)^{|P_\lambda|/2},\tfrac{n}{2}+\kappa_3 \sqrt{n}(\log n)^{|P_\lambda|/2}\right)\]
%    for some $\kappa_3$.
%    \item Finally, with high probability, there exists $t\leq t_0+t_1+t_2$ such that $|X_{t}-Y_{t}|\leq \frac{\sqrt{n}}{\log\log n}$ and \[X_{t},Y_{t} \in \left(\tfrac{n}{2}-\kappa_4\sqrt{n},\tfrac{n}{2}+\kappa_4\sqrt{n}\right)\]
%    for some $\kappa_4$ large enough.
%\end{enumerate}

%In particular, the probability that both \(|X_{t}-Y_{t}|\leq \frac{\sqrt{n}}{\log\log n}\text{ and }X_{t},Y_{t} \in \left(\tfrac{n}{2}-\kappa_4\sqrt{n},\tfrac{n}{2}+\kappa_4\sqrt{n}\right)\) occur by time $t\leq \frac{\log n}{2\log(1-2\lambda)^{-1}}+3\lambda^{-1}\log\log n$ is bounded below by 
%\[1-\left(\frac{1}{\kappa_1^2}+\frac{1}{\kappa_3^2} + \frac{1}{\kappa_4^2}\right)\]
%where each $\kappa$ has been chosen to be large but fixed. 
%
%
%Our work is not done yet, since $\frac{\sqrt{n}}{\log\log n}$ does not tend to $0$. 
%In the next section, we will analyze a few different distributions that will let us conclude that $(X_t)$ and $(Y_t)$ should mix in one more time step. 
%

\section{Lower bound}
\label{sec:lowerb}
% What do we get if we mess with the function choice$f(x)=(1-2x/n)^2$ i.e. the second moment? Consider the expectation and Var calculations from the original paper. Looking for a nontrivial lower bound. Can consider $t_*=t-c$ for...

% See Nestoridi-White 2019 on Shuffling Large Decks of Cards (last page). Can we use an established window?

Here we establish the lower bound on the mixing time~\eqref{eqn:upperb}.  First, however, we state some auxiliary results to be used throughout the paper.  For the reader, it may be more valuable to skip the proof of these results on a first pass and proceed directly to the proof of the lower bound.  The proof of the auxiliary results will be given later in~\cref{sec:aux}.

\subsection{Auxiliary lemmata}
For $x\in \calx$, define $f_i(x)$, $i=1,2,3$, by 
\begin{align}
\label{eqn:fs}
    f_1(x)= 1-\frac{2x}{n}, \qquad f_2(x) = 1-\frac{2(2n-1)x}{n^2}+\frac{2(2n-1)x(x-1)}{n^2(n-1)}, \,\,\, \text{ and } \,\,\, f_3(x)= 1- \frac{2x(n-x)}{n^2}.
\end{align}
We remark that while $f_1$ and $f_2$ are eigenfunctions for the Bernoulli-Laplace model with respective eigenvalues $f_1(k)$ and $f_2(k)$~\cite{EN}, $f_3(k)$ is contained in the righthand side of the bound in \cref{prop:couple}.  For further information on the dual Hahn eigenfunctions for the Bernoulli-Laplace chain, we refer the reader to~\cite{NW_19}.  See also~\cite{KZ_09} for further reading on polynomial eigenfunctions of Markov chains in general.     

One can check that $|f_1| \leq 1$ on $\calx$ and, after a short exercise optimizing the quadratic $f_2(x)$ on $\calx$, for $n\geq 2$ we have  
\begin{align}
\label{eqn:f2bound}
1- \frac{2n-1}{2n-2} \leq f_2(x) \leq 1  \,\,\,\,\, \text{ for all } \,\,\, x\in \calx.  
\end{align} 
We will also need the following basic structural facts about the functions $f_1$ and $f_2$.  
\begin{Lemma}
\label{lem:1}
For all $t\geq 0$ and all $X_0\in \calx$, we have the identities
\begin{align}
\label{id:1}
\E_{X_0} f_1(X_t) &=\Big(1-\frac{2k}{n}\Big)^t f_1(X_0)\\
\label{id:2} \E_{X_0} f_1(X_t)^2& = \frac{1}{2n-1}+ \frac{2n-2}{2n-1} f_2(k)^t f_2(X_0).  
\end{align}
Furthermore, let $q\geq0$ be a constant and suppose $X_0= \tfrac{n}{2}+ O( \sqrt{n} (\log n)^{q/2} )$ as $n\rightarrow \infty$.  Then we have the asymptotic formula 
\begin{align}
\label{eqn:f2as}
f_2(X_0) = 
O(n^{-1} (\log n)^q).  
\end{align}
\end{Lemma}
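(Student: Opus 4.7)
The overall plan is to exploit the fact, recalled just before the lemma, that $f_1$ and $f_2$ are eigenfunctions of the one-step Bernoulli--Laplace transition operator, with eigenvalues $f_1(k) = 1 - 2k/n$ and $f_2(k)$ respectively. Identities~\eqref{id:1} and~\eqref{id:2} will then reduce, respectively, to iterating the one-step eigenvalue relation and to re-expressing $f_1^2$ in the linear span of $\{1, f_2\}$. The asymptotic~\eqref{eqn:f2as} will follow from an (exact) Taylor expansion of the quadratic $f_2$ about its minimum at $x = n/2$.

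For~\eqref{id:1}, the one-step identity $\E_y f_1(X_1) = (1 - 2k/n)\, f_1(y)$ combined with the Markov property gives a direct induction: $\E_{X_0} f_1(X_{t+1}) = \E_{X_0}[\E_{X_t} f_1(X_1)] = (1 - 2k/n)\, \E_{X_0} f_1(X_t)$. Iterating in $t$ yields~\eqref{id:1}.

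For~\eqref{id:2}, I would first seek scalars $a,b$ (depending on $n$) such that $f_1(x)^2 = a + b\, f_2(x)$ holds identically on $\calx$. Expanding $f_1(x)^2 = 1 - 4x/n + 4x^2/n^2$ and matching the coefficients of $x^2$, $x$, and $x^0$ against the corresponding expansion of $a + b\, f_2(x)$ gives a consistent linear system whose unique solution is $a = 1/(2n-1)$ and $b = (2n-2)/(2n-1)$. Taking $\E_{X_0}$ of the resulting pointwise identity and using the analogue of~\eqref{id:1} for $f_2$, namely $\E_{X_0} f_2(X_t) = f_2(k)^t f_2(X_0)$, then produces~\eqref{id:2}.

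For the asymptotic~\eqref{eqn:f2as}, note that $f_2$ is a degree-two polynomial in $x$, so the Taylor expansion about $n/2$ is exact. A direct calculation gives $f_2'(n/2) = 0$, $f_2''(x) \equiv 4(2n-1)/[n^2(n-1)]$, and $f_2(n/2) = -1/[2(n-1)]$, whence
\[
f_2(X_0) \;=\; -\frac{1}{2(n-1)} \;+\; \frac{2(2n-1)}{n^2(n-1)}\!\left(X_0 - \tfrac{n}{2}\right)^{\!2}.
\]
Plugging in the hypothesis $(X_0 - n/2)^2 = O(n(\log n)^q)$, the first term is $O(1/n)$ and the second is $O((\log n)^q/n)$, which together give the claim. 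The only mild obstacle is the bookkeeping in identifying $a,b$ and in evaluating $f_2$ and its derivatives at $n/2$; these are routine algebra checks.
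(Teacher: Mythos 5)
Your proposal is correct and follows essentially the same route as the paper: iterate the eigenfunction relation for~\eqref{id:1}, express $f_1^2$ in the span of $\{1, f_2\}$ (with the same coefficients $a = 1/(2n-1)$, $b = (2n-2)/(2n-1)$) and apply the eigenvalue relation for $f_2$ to get~\eqref{id:2}, then expand $f_2$ around $n/2$ for~\eqref{eqn:f2as}. Your Taylor-expansion framing (noting $f_2'(n/2)=0$ and $f_2$ is quadratic, so the expansion is exact) is a marginally cleaner way of organizing the algebra that the paper carries out by direct substitution of $X_0 = n/2 + \zeta$, but it is the same computation.
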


\begin{Lemma}
\label{lem:2}
Suppose that \cref{assump:1} is satisfied.  Let $t=t(n)\in \mathbf{N}$ satisfy $t\leq t_n$ for all $n$ where $t_n$ is as in~\eqref{def:times}.  Let $h_1, h_2$ be given by $f_2(k)=1-2h_1(k)$ and $(1-2\lambda)^2= 1- 2h_2(\lambda)$, and set $\Delta_n':= h_1(k)-h_2(\lambda)$ and $\Delta_n''= k(n-k)/n^2 - (\lambda- \lambda^2)$.  Then as $n\rightarrow \infty$
\begin{align}
\label{as:id1} f_1(k)^{t} &=(1-2\lambda)^{t}\bigg(1 -\frac{ 2t \Delta_n}{1-2\lambda} + O(t^2 \Delta_n^2) \bigg),\\
\label{as:id2}f_2(k)^t &= (1-2\lambda)^{2t}\bigg(1 - \frac{2t \Delta_n'}{1- 2\lambda} +O(t^2( \Delta_n')^2) \bigg),\\
\label{as:id3} f_3(k)^t&=  (1-2\lambda + 2\lambda^2)^t\bigg(1- \frac{2t \Delta_n''}{1- 2\lambda} +O(t^2( \Delta_n'')^2)   \bigg).
\end{align}
Furthermore, 
\begin{align}
\label{eqn:dtil}
|\Delta_n'| \leq 2 |\Delta_n| + O(1/n)   \qquad \text{ and } \qquad |\Delta_n''| \leq |\Delta_n| 
\end{align}
for all $n$. 
\end{Lemma}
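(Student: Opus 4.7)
The three asymptotic identities \eqref{as:id1}--\eqref{as:id3} all reduce to a single elementary Taylor expansion: if $\epsilon_n = o(1)$ and $t\epsilon_n = o(1)$ as $n\to\infty$, then
\begin{align*}
(1-\epsilon_n)^t \;=\; \exp\bigl(t\log(1-\epsilon_n)\bigr) \;=\; 1 - t\epsilon_n + O(t^2\epsilon_n^2).
\end{align*}
My plan is to factor each base $f_i(k)$ as the limiting value times $(1 - \epsilon_n)$ for an appropriate small $\epsilon_n$ and then apply this expansion. For \eqref{as:id1}, the factorization
\begin{align*}
f_1(k) \;=\; 1 - \tfrac{2k}{n} \;=\; (1-2\lambda)\Bigl(1 - \tfrac{2\Delta_n}{1-2\lambda}\Bigr)
\end{align*}
does the job: by \textbf{(c1)} the denominator $1-2\lambda$ is bounded away from zero, and the hypotheses $t \leq t_n = O(\log n)$ together with $\Delta_n = o(1/\log n)$ from \textbf{(c2)} give $t\Delta_n = o(1)$. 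The expansion then yields \eqref{as:id1} at once.

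Before tackling \eqref{as:id2} and \eqref{as:id3}, I would first establish the bounds in \eqref{eqn:dtil}. A direct computation gives
\begin{align*}
\Delta_n'' \;=\; \tfrac{k}{n}\bigl(1 - \tfrac{k}{n}\bigr) - \lambda(1-\lambda) \;=\; \Delta_n(1-2\lambda-\Delta_n),
\end{align*}
and since $|1-2\lambda-\Delta_n| = |1-\lambda-k/n| \leq 1$ we obtain $|\Delta_n''|\leq |\Delta_n|$. For $\Delta_n'$, I would first simplify $f_2$ in \eqref{eqn:fs} by factoring $\frac{2(2n-1)x}{n^2}$ out of the last two terms to obtain $f_2(k) = 1 - \frac{2(2n-1)k(n-k)}{n^2(n-1)}$, so that $h_1(k) = \frac{(2n-1)k(n-k)}{n^2(n-1)}$. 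Writing $(2n-1)/(n-1) = 2 + 1/(n-1)$ yields $h_1(k) = 2 k(n-k)/n^2 + O(1/n) = 2\lambda(1-\lambda) + 2\Delta_n'' + O(1/n)$, and since $h_2(\lambda) = 2\lambda(1-\lambda)$ I conclude $\Delta_n' = 2\Delta_n'' + O(1/n)$. Combining with the bound on $\Delta_n''$ then gives $|\Delta_n'|\leq 2|\Delta_n|+O(1/n)$.

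With these bounds in hand, \eqref{as:id2} and \eqref{as:id3} follow from the same expansion strategy as \eqref{as:id1}. For \eqref{as:id3}, factor $f_3(k) = 1 - 2\lambda(1-\lambda) - 2\Delta_n'' = (1-2\lambda+2\lambda^2)\bigl(1 - \tfrac{2\Delta_n''}{1-2\lambda+2\lambda^2}\bigr)$; the denominator satisfies $1-2\lambda+2\lambda^2 \geq 1/2$ for $\lambda\in(0,1/2)$, and $t|\Delta_n''| \leq t|\Delta_n| = o(1)$, so the expansion applies and, after absorbing the constant $1-2\lambda+2\lambda^2$ into the written denominator, gives the stated form. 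Similarly for \eqref{as:id2}, write $f_2(k) = (1-2\lambda)^2 - 2\Delta_n' = (1-2\lambda)^2\bigl(1 - \tfrac{2\Delta_n'}{(1-2\lambda)^2}\bigr)$ and apply the expansion using $t|\Delta_n'| = O(t|\Delta_n|) + O(t/n) = o(1)$. There is no genuine obstacle here; the only point requiring any care is verifying $t\epsilon_n = o(1)$ at each application of the expansion, which is precisely what condition \textbf{(c2)} is designed to deliver in concert with $t \leq t_n = O(\log n)$.
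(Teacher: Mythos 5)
Your approach is essentially the same as the paper's: each $f_i(k)^t$ is handled by factoring out the limiting base, passing to logarithms, and Taylor expanding, using $t \leq t_n = O(\log n)$ together with $\Delta_n = o(1/\log n)$ to control the error; your derivations of the bounds on $\Delta_n'$ and $\Delta_n''$ also match the paper's ``short calculation.''

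The one step that does not hold up is the claim that the constant $(1-2\lambda)^2$ (resp.\ $1-2\lambda+2\lambda^2$) appearing in the denominator of the linear term can be ``absorbed'' into the written $1-2\lambda$. Replacing one constant denominator by another changes the linear coefficient by a fixed multiplicative factor, and the resulting discrepancy is $O(t\Delta_n')$ (resp.\ $O(t\Delta_n'')$), which strictly dominates the permitted $O(t^2(\Delta_n')^2)$ error precisely because $t\Delta_n' = o(1)$. What your computation actually establishes is \eqref{as:id2}--\eqref{as:id3} with denominators $(1-2\lambda)^2$ and $1-2\lambda+2\lambda^2$ respectively, and this is the correct form: the paper's own argument, which Taylor expands $g(x)=\log(1-2x)$ about $h_2(\lambda)$ using $g'(h_2(\lambda)) = -2/(1-2h_2(\lambda)) = -2/(1-2\lambda)^2$, produces exactly these constants. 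In other words, the denominator $1-2\lambda$ printed in \eqref{as:id2}--\eqref{as:id3} is a typo; it is harmless downstream because every later use of these identities only needs the weaker $(1+o(1))$ form, but you should state the corrected denominators rather than try to absorb a constant factor into an error term that cannot accommodate it.
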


\begin{Remark}
\label{rem:2}
Note that a similar formula to~\eqref{as:id1} is employed on~\cite[p.444]{NW_19} but without an assumption similar to~\cref{assump:1}.  Such a formula is not valid unless $k/n\rightarrow \lambda \in (0,1/2)$ sufficiently fast.     
\end{Remark}

\subsection{Proof of the lower bound}

If we let $f(x)=\sqrt{n-1}f_1(x)=\sqrt{n-1}(1-2x/n)$, then $f$ is tightly concentrated around its mean with respect to the stationary distribution $\pi_n$. 
This is true because $\pi_n$ has mean $n/2$ and the bulk of its mass is concentrated around its mean with exponentially decaying tails.  On the other hand, $\E_0 f(X_t) \approx (1-2\lambda)^{-c}$ for $t=t_n -c$ and large fixed $c$. 
Furthermore,  $P_t(0,\cdot)$ must concentrate values of $f(x)$ tightly around its mean, due to a constant bound on its variance. 
Thus, it is intuitive that $P_t(0,\cdot)$ must assign a significant portion of its mass away from $n/2$. 
In short, the bulk of the masses from $\pi_n$ and $P_t(0,\cdot)$ do not overlap, providing a lower bound on the total variation distance between these measures.  Note that this proof is presented in~\cite{NW_19}.  Below, we give the proof in full detail as it gives credence to the claim that the lower bound in~\eqref{eqn:lowb1} is sharp.    

\begin{Theorem}
Suppose that \cref{assump:1} is satisfied.
For all positive $\epsilon>0$, 
\[\TV{P_t(0,\cdot)-\pi_n} \geq 1-\epsilon\] 
provided that $n$ is large enough and \[t\leq \frac{\log n}{2|\log(1-2\lambda)|}-\frac{\log(\sqrt{3} +100)- \tfrac{1}{2}\log \epsilon }{ |\log (1-2\lambda)|}.\]
\end{Theorem}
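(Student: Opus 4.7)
The plan is to implement the moment-method (Wilson-style) lower bound outlined in the paragraph preceding the theorem, using the first dual-Hahn eigenfunction as a test statistic. Set $f(x) := \sqrt{n-1}\,f_1(x) = \sqrt{n-1}(1-2x/n)$; the mean of $f$ under $\pi_n$ is $0$, since $\pi_n$ is hypergeometric with mean $n/2$. By~\eqref{id:1} of \cref{lem:1} with $X_0 = 0$ (so $f_1(0) = 1$), the mean under $P_t(0,\cdot)$ is $m_t := \E_0 f(X_t) = \sqrt{n-1}\,f_1(k)^t$. Writing $t = t_n - c'$ with $c' > 0$ to be chosen and applying~\eqref{as:id1} of \cref{lem:2}, I obtain $m_t = \sqrt{(n-1)/n}\,(1-2\lambda)^{-c'}(1 + o(1)) \longrightarrow (1-2\lambda)^{-c'}$ as $n\to\infty$, which is a large constant once $c'$ is large.

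Next I would bound the two variances. Letting $t\to \infty$ in~\eqref{id:2} (equivalently, using the hypergeometric variance directly) gives $\Var_{\pi_n}(f) = (n-1)/(2n-1) \leq 1$. For the variance under $P_t(0,\cdot)$, combining~\eqref{id:1} and~\eqref{id:2} yields
\begin{align*}
\Var_0(f(X_t)) \;=\; \tfrac{n-1}{2n-1} \;+\; (n-1)\Big[\tfrac{2n-2}{2n-1} f_2(k)^t - f_1(k)^{2t}\Big].
\end{align*}
The two bracketed terms share leading asymptotics $(1-2\lambda)^{2t}$, and I would extract their cancellation via~\eqref{as:id1} and~\eqref{as:id2}: under~\eqref{eqn:dtil} and \cref{assump:1}(c2), both $t\Delta_n$ and $t\Delta_n'$ are $o(1)$ uniformly for $t\leq t_n$, so the bracket reduces to $-(1-2\lambda)^{2t}/(2n-1) + o((1-2\lambda)^{2t})$. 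At $t = t_n - c'$ one has $(1-2\lambda)^{2t} = (1-2\lambda)^{-2c'}/n$, so multiplying by $n-1$ contributes $o(1)$ and $\Var_0(f(X_t)) \to 1/2$; in particular $\Var_0(f(X_t)) \leq 1$ for $n$ sufficiently large.

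With the test set $A := \{x \in \calx : f(x) \geq m_t/2\}$, Chebyshev (centered at $0$ for $\pi_n$ and at $m_t$ for $P_t(0,\cdot)$) gives $\pi_n(A) \leq 4\Var_{\pi_n}(f)/m_t^2$ and $P_t(0,A^c) \leq 4\Var_0(f(X_t))/m_t^2$, so
\begin{align*}
\TV{P_t(0,\cdot) - \pi_n} \;\geq\; P_t(0, A) - \pi_n(A) \;\geq\; 1 \;-\; \frac{4\bigl(\Var_{\pi_n}(f) + \Var_0(f(X_t))\bigr)}{m_t^2} \;\geq\; 1 - \frac{C}{m_t^2}
\end{align*}
for an absolute constant $C$. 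Demanding that the right side exceed $1-\epsilon$ forces $m_t \geq \sqrt{C/\epsilon}$; substituting $m_t \sim (1-2\lambda)^{-c'}$ gives $c' \geq (\tfrac{1}{2}\log C - \tfrac{1}{2}\log\epsilon)/|\log(1-2\lambda)|$, which is exactly the bound displayed in the theorem — the constant $\log(\sqrt{3}+100)$ comfortably absorbs $\tfrac{1}{2}\log C$ together with the $(1+o(1))$ slack in the asymptotics for $m_t$.

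The main technical obstacle is the variance computation in the second paragraph: two quantities of identical leading order $(1-2\lambda)^{2t}$ must be subtracted and the remainder shown to be small after multiplication by $n-1$. This is precisely the step where condition (c2) of~\cref{assump:1}, namely $\Delta_n = o(1/\log n)$, is indispensable, as flagged in~\cref{rem:2}; without it one loses control of the subleading terms in~\eqref{as:id1} and~\eqref{as:id2} and $\Var_0(f(X_t))$ need not remain bounded at $t = t_n - c'$. Every other step — the mean computation, Chebyshev, and inversion for $c'$ — is routine bookkeeping.
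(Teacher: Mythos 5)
Your proposal is correct and follows essentially the same route as the paper: the same test statistic $f(x)=\sqrt{n-1}\,f_1(x)$, the same moment computations via \cref{lem:1} and \cref{lem:2} (first moment $\to(1-2\lambda)^{-c'}$, both variances $O(1)$), and the same Chebyshev separation argument, with only a cosmetic difference in the choice of test set (a single half-line $\{f\ge m_t/2\}$ versus the paper's pair $A_\alpha$, $B_{r,c}$) and slightly different, but equivalent, constants in the variance bound.
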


\begin{proof}
Let $\pi$ be the stationary distribution of $(X_t)$ with $n$ suppressed.  
Let $f(x) = 
\sqrt{n-1}(1-\frac{2x}{n})$.  Since $\pi$ has hypergeometric distribution as in~\eqref{eqn:pidist}:
\[\E_{\pi} f(X_t) = \sqrt{n-1}\, \E_{\pi} f_1(X_t)=0.\]
Also note that 
\begin{align*}
    \text{Var}_{\pi} f(X_t) &= \E_{\pi}f(X_t)^2-(\E_{\pi} f(X_t))^2\\
    &= (n-1) \E_{\pi} f_1(X_t)^2\\
    &= \frac{n-1}{2n-1} +(n-1)\frac{2n-2}{2n-1} f_2(k)^t\E_{\pi}f_2(X_t)
\end{align*}
where the last equality follows from \cref{lem:1}.
Since $f_2(x)$ is an (right) eigenfunction of the chain with eigenvalue $f_2(k)\neq 1$, it is orthogonal to $\pi$.
In particular, $\E_{\pi} f_2(X_t)=0$. 
Thus, $\text{Var}_{\pi} f(X_t)\leq 1/2$. 

We repeat these calculations for the chain started from $0$ for $t$ satisfying 
\begin{align}
\label{eqn:tdef2}
t= \frac{\log n}{2|\log(1-2\lambda)|}-c=t_n-c 
\end{align}
for a constant $c$ which we will determine later.  Observe by \cref{lem:1} and \cref{lem:2} 
\begin{align}\E_{0} f(X_t) = \sqrt{n-1}\E_0 f_1(X_t) &=\sqrt{n-1} (1-\tfrac{2k}{n})^t \nonumber\\ \label{eqn:310}
&=\sqrt{n-1} (1-2\lambda)^t(1+o(1))\\
&= \frac{\sqrt{n-1}}{\sqrt{n}} (1-2\lambda)^{-c}(1+o(1))\nonumber\\ \label{eqn:311}
&= (1+o(1))(1-2\lambda)^{-c}.
\end{align}
Similarly, combining \eqref{eqn:310} with \cref{lem:1} and \cref{lem:2} we have 
\begin{align*}
    \text{Var}_{0} f(X_t)&= \E_0f(X_t)^2- (\E_0f(X_t))^2\\
    &= (n-1) \left[\frac{1}{2n-1}+ \frac{2n-2}{2n-1} f_2(k)^t f_2(0)\right] - (n-1)(1-2\lambda)^{2t}(1+o(1))\\
  &= (n-1) \left[\frac{1}{2n-1}+\frac{2n-2}{2n-1}(1-2\lambda)^{2t}( 1+ o(1))\right] - (n-1)(1-2\lambda)^{2t}(1+o(1))  \\
  &= \frac{n-1}{2n-1} + (n-1) (1-2\lambda)^{2t}(1+o(1))-(n-1)(1-2\lambda)^{2t}(1+o(1))\\
  &= \frac{1}{2}+ o(1) (n-1) (1-2\lambda)^{2t}= \frac{1}{2} +o(1) (1-2\lambda)^{-2c}.
          \end{align*}
In particular, $\text{Var}_{0}f(X_t) \leq 3/2$ for $n$ sufficiently large.

We finish the proof by applying Chebychev's inequality. 
Let 
\begin{align*}
A_\alpha = \{x\in \calx \, : \, |f(x)|\leq \alpha\} \qquad \text{ and } \qquad B_{r,c}=\{x\in \calx \, : \, |f(x) - \E_0 f(X_t)|\leq r\sqrt{3/2}\}
\end{align*}
where we recall that $t$ is as in~\eqref{eqn:tdef2}. 
Notice that \[\pi(A_\alpha)=\PP_\pi (\{|f(X_t)|\leq \alpha\})=1-\PP_\pi (\{|f(X_t)|>\alpha\})\geq 1- \frac{1}{2\alpha^2}\]
where we used $\PP_\pi(\{|f(X_t)|>\alpha\})\leq \alpha^{-2}\E_\pi |f(X_t)|^2 \leq 1/(2\alpha^2)$.  
On the other hand, in a similar way we have 
\[ {P_t}(0,B_{r,c}) = 1-P_t(0, B_{r,c}^{\text{c}}) =
1-\PP_0\{ |f(X_t)-\E_0f(X_t)|> r\sqrt{3/2}\}\geq 1-\frac{1}{r^2}\]
for $n$ sufficiently large.  
Notice that if $(1-2\lambda)^{-c}(1+o(1))-r\sqrt{3/2}\geq 100 \alpha$, then $A_\alpha$ and $B_{r,c}$ are disjoint.
Therefore, $\mu(B_
{r,c}^{\text{c}}) \geq \mu(A_\alpha)$ for any measure $\mu$ on subsets of $\calx$.
In particular, for appropriate choices of $\alpha, c, r$ we have that $P_t(0,A_\alpha)\leq 1/r^2$. 

Now let $\epsilon>0$ be small but fixed. Choose $\alpha, r$ such that $\frac{1}{2\alpha^2},\frac{1}{r^2}\leq \epsilon/2$ and pick $c$ such that $(1-2\lambda)^{-c}-r\sqrt{3/2}\geq 100\alpha$.  Note that under these constraints, we may choose $$c=\frac{\log(\sqrt{3} +100)- \tfrac{1}{2}\log \epsilon }{ |\log (1-2\lambda)|}.$$  
Then
\begin{align*}
    \TV{P_t(0,\,\cdot\,)-\pi (\, \cdot \,)}&\geq |P_t(0,A_\alpha)-\pi(A_\alpha)|= 1-\frac{1}{2\alpha^2}- \frac{1}{r^2}\geq 1-\epsilon. 
\end{align*}
This concludes the proof.
\end{proof}

\begin{Remark}
Notice that if $c\leq 0$, then $\E_{0}f(X_t)=(1+o(1))(1-2\lambda)^{-c}$ and will quickly approach $0$ as $c\to -\infty.$ 
In particular, $\E_{0}f(X_t)\to 0$ as $t$ increases beyond $t_n$. 
This roughly translates to the claim that $P_t(0,\cdot)$ and $\pi_n$ both assign almost all of their mass close to $n/2$. 
If this is true, then the total variation distance between $P_t(0,\cdot)$ and $\pi_n$ should be small.
This provides evidence for believing that the lower bound on the mixing time is sharp.
\end{Remark}
%\section{Bounds on the Expectation and Variance}
%
%Before proceeding with the upper bound argument, we will collect some preliminary results about the expectation and variance of the Bernoulli-Laplace chain when $k/n\to \lambda \in (0,1/2)$.
%The Lemma \ref{Lemma:EN1} is identical to Lemma 3 in \cite{EN} except for the fact that we have $k/n\to \lambda$ instead of $k=o(n)$. 
%Since the argument for Lemma 3 in \cite{EN} is purely an eigenvector argument, it will  hold when $k/n\to\lambda$. 
%
%\begin{Lemma}[Lemma $3$ in \cite{EN}] \label{Lemma:EN1} For every $t\ge 0$, we have 
%\[\E_{X_0}(X_t) = \frac{n}{2}-\left(1-\frac{2k}{n}\right)^t\left(\frac{n}{2}-X_0\right)\]
%and
%\[\Var_{X_0}(X_t) = \frac{n^2}{4(2n-1)}+\frac{n^2(n-1)}{2(2n-1)}f_2(k)^tf_2(X_0)-\frac{n^2}{4}\left(1-\frac{2k}{n}\right)^{2t}\left(1-\frac{2X_0}{n}\right)^2.\]
%\end{Lemma}

\section{Getting at distance $o(\sqrt{n})$}
\label{sec:osmallrootn}

Throughout this section, $(X_t)$ and $(Y_t)$ will denote two copies of the Bernoulli-Laplace chain.  The goal of this section is to show that in $3s_n +t_n$ steps (with $s_n, t_n$ as in~\eqref{def:times}) the chains are distance $o(\sqrt{n})$ apart with high probability.  The argument establishing this fact is broken into a few pieces based on the size of the initial distance $|X_0-Y_0|$.  First, we see that in $t_n$ steps, the two copies are within distance $O(\sqrt{n})$ (see~\cref{Lemma:Orootn} below) with high probability, regardless of their initial states.  Then, starting at distance $O(\sqrt{n})$ apart, we see that in \cref{Lemma:orootn} using the path coupling described in \cref{sec:outline}, the distance is decreased to $o(\sqrt{n})$ in $s_n$ steps with high probability.  Although the two processes become closer in this second step, there is a possibility that they slightly deviate from the mean  $n/2$ of the stationary distribution.  Consequently, in \cref{prop:returntomean} we show that in $2s_n$ steps, the chains return to the $O(\sqrt{n})$ band about $n/2$ while maintaining a distance of $o(\sqrt{n})$ between the two chains.        

\subsection*{Asymptotic notation}
Below, we adopt some further asymptotic notation for convenience of mathematical expression.  In what follows, if $a_n, b_n $ are sequences of nonnegative real numbers, we write $a_n \lesssim b_n$ if $a_n =O(b_n)$ as $n\rightarrow \infty$ and the constant in the asymptotic estimate does \emph{not} depend on the parameters $\kappa_i>0$ below.  We will also write $a_n \cong b_n$ if both $a_n \lesssim b_n$ and $b_n \lesssim a_n$.

\subsection{Getting at distance $O(\sqrt{n})$}

Applying \cref{lem:1} and~\cref{lem:2}, we will now show that running the chains $(X_t)$ and $(Y_t)$ for $t=t_n$ steps brings $X_t$ and $Y_t$ within distance $O(\sqrt n)$ of each other with high probability. 

\begin{Lemma}\label{Lemma:Orootn}
Suppose \cref{assump:1} is satisfied and   
for a fixed constant $\kappa_1\in (0,\infty)$, let \[\tau_1(\kappa_1):=\min \left\{t\,:\, X_t,Y_t\in (\tfrac{n}{2}-\kappa_1\sqrt{n},\tfrac{n}{2}+\kappa_1\sqrt{n}) \right\}.\]
Then, \[\PP(\tau_1(\kappa_1)>t_n) \lesssim  \frac{1}{\kappa_1^2}.\]
 \end{Lemma}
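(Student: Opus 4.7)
The plan is to apply Chebyshev's inequality to each of the squared deviations $(X_{t_n}-n/2)^2$ and $(Y_{t_n}-n/2)^2$ separately, and then combine the two estimates with a union bound. Observe that the coupling between $(X_t)$ and $(Y_t)$ plays no role in this step; we only need marginal control on each chain. All of the required second-moment machinery is already packaged in \cref{lem:1} and \cref{lem:2}, so the argument reduces to a careful computation.

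First I would use the identity $X_t - n/2 = -(n/2)f_1(X_t)$ to rewrite
\begin{align*}
\E_{X_0}(X_{t_n}-n/2)^2 = \frac{n^2}{4}\,\E_{X_0}f_1(X_{t_n})^2.
\end{align*}
Substituting \eqref{id:2} and using the uniform bound $|f_2(X_0)|\leq 1$ implied by \eqref{eqn:f2bound} gives
\begin{align*}
\E_{X_0}f_1(X_{t_n})^2 \leq \frac{1}{2n-1} + \frac{2n-2}{2n-1}\,|f_2(k)|^{t_n}.
\end{align*}

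The quantitative input I need is $f_2(k)^{t_n}\lesssim 1/n$. Since $(1-2\lambda)^{2t_n}=1/n$ by the definition of $t_n$, the expansion \eqref{as:id2} together with the bound $|\Delta_n'|\leq 2|\Delta_n|+O(1/n)$ from \eqref{eqn:dtil} and \cref{assump:1}(c2) imply $t_n\Delta_n'=o(1)$, hence $f_2(k)^{t_n}=(1-2\lambda)^{2t_n}(1+o(1))\lesssim 1/n$. Plugging this back yields
\begin{align*}
\E_{X_0}(X_{t_n}-n/2)^2 \lesssim n
\end{align*}
uniformly in $X_0\in\calx$.

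With this second-moment bound, Chebyshev's inequality gives $\PP\{|X_{t_n}-n/2|>\kappa_1\sqrt{n}\}\lesssim 1/\kappa_1^2$ uniformly in $X_0$, and the identical bound holds with $Y$ in place of $X$. Writing $I=(\tfrac{n}{2}-\kappa_1\sqrt{n},\tfrac{n}{2}+\kappa_1\sqrt{n})$, the inclusion $\{\tau_1(\kappa_1)>t_n\}\subseteq\{X_{t_n}\notin I\}\cup\{Y_{t_n}\notin I\}$ together with a union bound then closes the argument. I do not anticipate a substantive obstacle; the only delicate point is verifying $f_2(k)^{t_n}\lesssim 1/n$, and this is precisely why \cref{assump:1}(c2) was imposed --- it ensures the correction factor in \eqref{as:id2} is $1+o(1)$ rather than inflating the second moment by an unbounded amount.
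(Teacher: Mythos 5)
Your proposal is correct and follows essentially the same route as the paper: Chebyshev on the second moment of $f_1(X_{t_n})$ via \eqref{id:2}, the uniform bound on $f_2(X_0)$ from \eqref{eqn:f2bound}, the expansion \eqref{as:id2} with \cref{assump:1}(c2) to get $f_2(k)^{t_n}\lesssim (1-2\lambda)^{2t_n}=1/n$, and a union bound over the two chains. The only cosmetic difference is that you phrase the Chebyshev step in terms of $(X_{t_n}-n/2)^2$ rather than $f_1(X_{t_n})^2$ directly, which is the same thing up to the factor $n^2/4$.
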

\begin{proof}
By combining Chebychev's inequality with \cref{lem:1} and~\cref{lem:2} we have
\begin{align}
\nonumber \PP_{X_0}\left(\left|X_{t_n}-\tfrac{n}{2}\right|>\kappa_1 \sqrt{n}\right)= \PP(f_1(X_{t_n})^2>4\kappa_1^2/n)&\leq \frac{n}{4 \kappa_1^2} \E_{X_0} f_1(X_{t_n})^2\\
\nonumber &= \frac{n}{4\kappa_1^2}[\tfrac{1}{2n-1} + \tfrac{2n-2}{2n-1}f_2(k)^{t_n} f_2(X_0)]\\
&\lesssim \frac{1}{\kappa_1^2} + \frac{n}{\kappa_1^2}(1-2\lambda)^{2t_n} \lesssim \frac{1}{\kappa_1^2} \label{cheby3}  
\end{align}
where in the asymptotic inequality~\eqref{cheby3} we used~\eqref{eqn:f2bound} to bound $f_2(X_0)$.  
Thus, 
\[\PP\left(\tau_1(\kappa_1)> t_n\right)\leq \PP_{X_0} \left(|X_{t_n}-\tfrac{n}{2}|>\kappa_1\sqrt{n}\right)+\PP_{Y_0} \left(|Y_{t_n}-\tfrac{n}{2}|>\kappa_1\sqrt{n}\right)\lesssim \frac{1}{\kappa_1^2}\]
by the estimate \eqref{cheby3}. 
\end{proof}

Given that the chains are sufficiently close to $n/2$, we want to ensure that they stay near $n/2$ for a significant window of time.  \cref{prop:step1} below shows that with high probability, the chains will stay close to the mean for $s_n=\lambda^{-1}\log\log n$ steps after falling within the $\kappa_1\sqrt{n}$ range of $n/2$.  Later, this provides ample time to move the chains closer without deviating too far from the mean of the stationary distribution.

\begin{Proposition}\label{prop:step1}
Suppose that~\cref{assump:1} is satisfied and that $|X_{0}-\tfrac{n}{2}|<\kappa_1 \sqrt{n}$. Then, for every $r\in (0, \infty)$, $s\in\N$ we have 
\[\PP\Big(\sup\limits_{t\in I_n(s)} |X_t-\tfrac{n}{2}|>r\Big)\lesssim \frac{n}{r^2}(\log n)^{|P_\lambda|},\]
where $I_n(s)=[s, s+ s_n]$, $P_\lambda = \log(1-2\lambda)\frac{2}{\lambda}$ and $s_n= \lambda^{-1}\log \log n$ is as in~\eqref{def:times}. 
\end{Proposition}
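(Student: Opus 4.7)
The plan is to combine Doob's $L^2$ maximal inequality with the eigenfunction $f_1$ to obtain a moment bound on $\sup_{t \in I_n(s)} f_1(X_t)^2$, which is equivalent to what we want since $|X_t - \tfrac{n}{2}| = \tfrac{n}{2}|f_1(X_t)|$, and then finish with Chebyshev's inequality. Under \cref{assump:1}, $k/n \leq \delta < 1/2$, so $f_1(k) = 1 - 2k/n$ is bounded away from $0$ and $1$, and the process
\[M_t := \frac{f_1(X_t)}{f_1(k)^t}\]
is a martingale with respect to the natural filtration since $f_1$ is a right eigenfunction of the chain with eigenvalue $f_1(k)$.

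The pointwise identity $|f_1(X_t)| = f_1(k)^t|M_t|$, together with the fact that $t \mapsto f_1(k)^t$ is decreasing, gives for $t \in I_n(s) = [s, s + s_n]$ the bound $f_1(X_t)^2 \leq f_1(k)^{2s} M_t^2$, and hence
\[\sup_{t \in I_n(s)} f_1(X_t)^2 \leq f_1(k)^{2s}\sup_{t \leq s + s_n} M_t^2.\]
Taking expectations and invoking Doob's $L^2$ maximal inequality for $M_t$ on $[0, s+s_n]$ produces
\[\E\sup_{t \in I_n(s)} f_1(X_t)^2 \leq 4 f_1(k)^{2s}\E M_{s+s_n}^2 = 4\, f_1(k)^{-2 s_n}\E f_1(X_{s+s_n})^2,\]
where the key feature is that the two powers of $f_1(k)$ carrying $s$ cancel, yielding a bound that is \emph{uniform} in the shift $s$.

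It remains to estimate the two factors on the right. Identity \eqref{id:2} of \cref{lem:1} gives
\[\E f_1(X_{s+s_n})^2 = \tfrac{1}{2n-1} + \tfrac{2n-2}{2n-1}\, f_2(k)^{s+s_n} f_2(X_0),\]
and the hypothesis $|X_0 - \tfrac{n}{2}| < \kappa_1 \sqrt{n}$ together with the asymptotic \eqref{eqn:f2as} (with $q=0$) gives $f_2(X_0) = O(1/n)$. Combined with $|f_2(k)| \leq 1$ from \eqref{eqn:f2bound}, this produces $\E f_1(X_{s+s_n})^2 = O(1/n)$ uniformly in $s$. For the prefactor, I apply identity \eqref{as:id1} of \cref{lem:2} at $t = s_n \leq t_n$; \cref{assump:1} forces $s_n \Delta_n = O(\log\log n / \log n) = o(1)$, so $f_1(k)^{s_n} = (1-2\lambda)^{s_n}(1+o(1))$, and therefore
\[f_1(k)^{-2 s_n} \cong (1-2\lambda)^{-2 s_n} = \exp\bigl(|P_\lambda|\log\log n\bigr) = (\log n)^{|P_\lambda|}.\]

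Assembling these bounds yields $\E\sup_{t \in I_n(s)} f_1(X_t)^2 \lesssim (\log n)^{|P_\lambda|}/n$, and Markov's inequality closes the argument through
\[\PP\Bigl(\sup_{t \in I_n(s)}|X_t - \tfrac{n}{2}| > r\Bigr) = \PP\Bigl(\sup_{t \in I_n(s)} f_1(X_t)^2 > \tfrac{4 r^2}{n^2}\Bigr) \leq \frac{n^2}{4 r^2}\,\E\sup_{t \in I_n(s)} f_1(X_t)^2 \lesssim \frac{n\,(\log n)^{|P_\lambda|}}{r^2}.\]
The delicate point of the argument is the cancellation $f_1(k)^{2s}\cdot f_1(k)^{-2(s+s_n)} = f_1(k)^{-2 s_n}$, which is what makes the estimate independent of $s$; without exploiting both the monotonicity of $f_1(k)^t$ \emph{and} Doob's inequality applied over the full interval $[0, s+s_n]$, a more direct approach would leave an unbounded $s$-dependent prefactor. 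The use of the $O(\sqrt{n})$ window on $X_0$ to obtain $f_2(X_0) = O(1/n)$ is equally essential: the crude bound $|f_2(X_0)| \leq 1$ would leave a contribution of order $(f_2(k)/f_1(k)^2)^{s_n}$ which tends to a positive constant and destroys the $1/n$ gain.
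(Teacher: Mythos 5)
Your proof is correct and follows essentially the same route as the paper: introduce the martingale $M_t = f_1(X_t)/f_1(k)^t$, pull the factor $f_1(k)^{2s}$ out of the supremum, apply a Doob-type maximal inequality to the submartingale $M_t^2$ over $[0,s+s_n]$, and finish with the identity \eqref{id:2} together with $f_2(X_0)=O(1/n)$ and the asymptotics for $f_1(k)^{-2s_n}$. The only cosmetic difference is that you invoke Doob's $L^2$ maximal inequality followed by Markov, whereas the paper applies the Chebyshev form of Doob's submartingale inequality directly; these are interchangeable here.
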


\begin{proof}Let $s \in \mathbf{N}$.  For $t\ge 0$, note that $|X_t-\tfrac{n}{2}|>r$ if and only if $f_1(X_t)=|1-2X_t/n|>2r/n$.
Define $M_t =f_1(X_t)/f_1(k)^t$. It follows that $M_t$ is a martingale by~\cref{lem:1} relation \eqref{id:1}.  Setting $s_n=  \lambda^{-1} \log \log n$, we thus obtain
\begin{align*}
    \PP\Big(\sup\limits_{t\in I_n(s)} \left|X_t-\tfrac{n}{2}\right|>r\Big)=\PP\Big(\sup\limits_{t\in I_n(s)} |f_1(X_t)|>\tfrac{2r}{n}\Big)&=\PP\Big(\sup\limits_{t\in I_n(s)}f_1(k)^t |M_t|>\tfrac{2r}{n}\Big)\\
    &\leq \PP\bigg(\sup\limits_{t\in I_n(s)} |M_t|>\frac{2r}{n}\frac{1}{f_1(k)^s }\bigg)\\
    &\leq \frac{n^2f_1(k)^{2s}}{4r^2}\E|M_{s+s_n} |^2,
\end{align*}
where on the last inequality we used the fact that $|M_t|^2$ is a submartingale.  Note that\begin{align*}
  \frac{n^2f_1(k)^{2s}}{4r^2}\E|M_{s+s_n} |^2
    &=\frac{n^2}{4r^2}\frac{1}{f_1(k)^{2s_n}}\E f_1(X_{s+s_n})^2\\
    &=\frac{n^2}{4r^2}\frac{1}{\left(1-\frac{2k}{n}\right)^{2s_n}}\left(\frac{1}{2n-1}+\frac{2n-2}{2n-1}f_2(k)^{s+s_n}f_2(X_0)\right).  
\end{align*}
where on the last line we use \cref{lem:1} relation~\eqref{id:2}.  

Now observe  that by \cref{lem:1}, $f_2(X_0)=O(n^{-1})$ since $|X_0- \tfrac{n}{2}| \leq \kappa_1 \sqrt{n}$.  Also, using~\cref{lem:2} with \cref{assump:1} implies $f_2(k)^{s+s_n}= o(1)$ as
 $n\rightarrow \infty$.  Combining the previous two observations and using \cref{lem:2} again produces
%\[\textcolor{red}{\frac{1}{2n-1}+\frac{2n-2}{2n-1}f_2(k)^{s+s_1}f_2(X_0) \sim \frac{1}{2n-1}+\frac{2n-2}{2n-1}f_2(k)^{s_1}f_2(X_0)}\]
%\textcolor{red}{The above is false but also not needed} and
\begin{align*}
    \frac{n^2}{4r^2}\frac{1}{\left(1-\frac{2k}{n}\right)^{2s_n}}\left(\frac{1}{2n-1}+\frac{2n-2}{2n-1}f_2(k)^{s+s_n}f_2(X_0)\right)
    \cong \frac{n}{8r^2}\frac{1}{\left(1-\frac{2k}{n}\right)^{2s_n}} \cong \frac{n}{8r^2}(\log n)^{|P_\lambda|}.    %&\approx \frac{n}{4r^2}e^{2k\frac{2}{\lambda n}\log{\log{n}}}\\
    %&\approx\frac{n}{4r^2}e^{4\log{\log{n}}}
\end{align*}
Thus,
\[\PP\bigg(\sup\limits_{t\in I_n(s)} \left|X_t-\tfrac{n}{2}\right|>r\bigg)\lesssim \frac{n}{r^2}(\log n)^{|P_\lambda|} ,\] as claimed.  
\end{proof}

\subsection{Getting at distance $o(\sqrt{n})$ from $O(\sqrt{n})$}

Now, given that $X_0$ and $Y_0$ are within distance $O(\sqrt{n})$ of $n/2$, \cref{Lemma:orootn} below will bring the two chains $X_t$ and $Y_t$ within distance $o(\sqrt{n})$ of each other with high probability in a negligible number of steps.  This result is a modified version of~\cite[Lemma 12]{EN}.  Note, however, that although the distance between the chains decreases, the chains may slightly deviate from the mean.  See the choice of $r_n=\sqrt{n} (\log n)^{|P_\lambda|/2}$ below in the statement of~\cref{Lemma:orootn}.

\begin{Lemma}\label{Lemma:orootn}
Suppose that~\cref{assump:1} is satisfied and that $X_0,Y_0\in (\frac{n}{2}-\kappa_2\sqrt{n},\frac{n}{2}+\kappa_2\sqrt{n})$ for some $\kappa_2\in (0,\infty)$.  Suppose, furthermore, that the chains $(X_t)$,$(Y_t)$ are coupled according \cref{coupling}.  For $\kappa_3\in (0,\infty)$, consider the stopping time 
\[\tau_3(\kappa_3):= \min \left\{ t:|X_t-Y_t|\leq \frac{\sqrt{n}}{\log \log n}\quad\text{ and }\quad X_t,Y_t\in \left(\tfrac{n}{2}-\kappa_3r_n,\tfrac{n}{2}+\kappa_3r_n\right )\right\}\]
where $r_n =\sqrt{n}(\log n)^{|P_\lambda|/2}.$
Then we have 
\[\PP\left(\tau_3(\kappa_3)>s_n\right)\lesssim \frac{1}{\kappa_3^2}\]
where $s_n= \lambda^{-1} \log \log n$ is as in~\eqref{def:times}. 
\end{Lemma}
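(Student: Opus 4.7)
The plan is to decompose the event $\{\tau_3(\kappa_3) > s_n\}$ into two failure modes and control each one using tools already developed in the paper: the path coupling bound (\cref{prop:couple}) to ensure the gap between the two chains closes quickly, and the maximal second-moment estimate (\cref{prop:step1}) to ensure neither chain strays too far from $n/2$ during the coupling window.

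First I would apply \cref{prop:couple} with $r=\sqrt{n}/\log\log n$. Since $|X_0-Y_0| \leq 2\kappa_2\sqrt{n}$ by the triangle inequality, this yields
\begin{align*}
\PP(\tau_{\text{couple}}(r) > s_n)
\leq \left(1-\tfrac{2k(n-k)}{n^2}\right)^{s_n}\!\!\cdot\frac{2\kappa_2\sqrt{n}}{\sqrt{n}/\log\log n}
= 2\kappa_2\,(\log\log n)\cdot f_3(k)^{s_n}.
\end{align*}
By the asymptotic identity \eqref{as:id3} in \cref{lem:2} together with \cref{assump:1}, one has $f_3(k)^{s_n} \sim (1-2\lambda+2\lambda^2)^{\lambda^{-1}\log\log n} = (\log n)^{-q}$ with $q := -\lambda^{-1}\log(1-2\lambda+2\lambda^2) > 0$, since $1-2\lambda+2\lambda^2 \in (0,1)$ for $\lambda \in (0,1/2)$. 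Hence $\PP(\tau_{\text{couple}}(r) > s_n) = o(1)$ as $n\to\infty$, with a rate independent of $\kappa_3$.

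Second, I would invoke \cref{prop:step1} applied to each of $X_t$ and $Y_t$ separately with $s=0$ and the choice $r = \kappa_3 r_n = \kappa_3\sqrt{n}(\log n)^{|P_\lambda|/2}$, which gives
\begin{align*}
\PP\Big(\sup_{t\in[0,s_n]} |X_t - \tfrac{n}{2}| > \kappa_3 r_n\Big) \lesssim \frac{n}{\kappa_3^2 n (\log n)^{|P_\lambda|}}(\log n)^{|P_\lambda|} = \frac{1}{\kappa_3^2},
\end{align*}
with the same bound for $Y_t$. Now observe the event inclusion
\begin{align*}
\{\tau_3(\kappa_3) > s_n\} \subseteq \{\tau_{\text{couple}}(r) > s_n\} \cup \Big\{\sup_{t\leq s_n}|X_t-\tfrac{n}{2}|>\kappa_3 r_n\Big\} \cup \Big\{\sup_{t\leq s_n}|Y_t-\tfrac{n}{2}|>\kappa_3 r_n\Big\},
\end{align*}
which holds because on the complement of the right-hand side, at the time $\tau_{\text{couple}}(r) \leq s_n$ both chains already lie in the prescribed band and are within distance $\sqrt{n}/\log\log n$, so $\tau_3(\kappa_3)\leq s_n$. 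A union bound together with the two estimates above yields the claim, with the $o(1)$ term from the coupling step absorbed into $1/\kappa_3^2$ for $n \geq N(\kappa_3)$.

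The main subtlety, rather than obstacle, is verifying that the coupling contraction rate $f_3(k)^{s_n}$ is genuinely a negative power of $\log n$ under \cref{assump:1}; this is exactly what \eqref{as:id3} and the bound $|\Delta_n''|\leq|\Delta_n| = o(1/\log n)$ guarantee, since they ensure the leading asymptotic $(1-2\lambda+2\lambda^2)^{s_n}$ is not spoiled by lower-order terms when raised to the power $s_n = \lambda^{-1}\log\log n$. Everything else is a direct plug-in of previously established results.
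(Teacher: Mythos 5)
Your proof is correct and follows the same route as the paper: the same event inclusion into coupling failure plus deviation from the band, Proposition~\ref{prop:couple} combined with~\eqref{as:id3} of Lemma~\ref{lem:2} to make the coupling term $o(1)$, Proposition~\ref{prop:step1} with $s=0$ and $r=\kappa_3 r_n$ to bound the deviation term by $1/\kappa_3^2$, and a union bound. The only difference is cosmetic: you spell out the exponent $q=-\lambda^{-1}\log(1-2\lambda+2\lambda^2)$ and note explicitly that the $o(1)$ term is absorbed for $n\geq N(\kappa_3)$, which the paper leaves implicit.
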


% \frac{\log{n}}{2\log(1-2\lambda)^{-1}}

\begin{proof}
Recall by~\cref{assump:1}, $k<\tfrac{n}{2}$.   Also recall that (see~\eqref{eqn:tcouple})
\[\tau_{\emph{couple}}\left(\frac{\sqrt{n}}{\log \log n}\right)=\min\left\{t: |X_t-Y_t|\leq \frac{\sqrt{n}}{\log\log n}\right\}.\] 
By definition of $\tau_3(\kappa_3)$,
\begin{align*}
\left\{\tau_3(\kappa_3)>s_n\right\}&\subseteq\Big\{\tau_{\emph{couple}}\left(\tfrac{\sqrt n}{\log\log n}\right)> s_n\Big\} \cup \bigg( \bigcup_{t\in [0,s_n]}\left\{|X_t-\tfrac{n}{2}|\vee|Y_t-\tfrac{n}{2}|>\kappa_3 r_n \right\}\bigg)  .\end{align*}
By \cref{prop:couple}, the assumption on $|X_0-Y_0|$ and \cref{lem:2}, we know that 
\begin{align*}
    \PP\Big(\tau_{\emph{couple}}\left(\tfrac{\sqrt{n}}{\log\log n}\right)>s_n\Big)
    &\lesssim (1-2\lambda +2\lambda^2)^{s_n}\frac{2\kappa_2\sqrt{n}}{\tfrac{\sqrt{n}}{\log\log n}}\\
    &= e^{\log(1-2\lambda + 2\lambda^2)\lambda^{-1}\log\log n}2 \kappa_2\log\log n\\
    &=\frac{2\kappa_2\log\log n}{(\log n)^{|\log(1-2\lambda+2\lambda^2)|\lambda^{-1}}} =o(1).  
\end{align*}
Applying \cref{prop:step1} we get 
\[\PP\bigg(\bigcup_{t\in [0,s_n]}\left\{|X_t-\tfrac{n}{2}|\vee|Y_t-\tfrac{n}{2}|>\kappa_3r_n )\right\}\bigg)\lesssim \frac{n}{\kappa_3^2 r_n^2}(\log n)^{|P_\lambda|}\lesssim \frac{1}{\kappa_3^2}.\]
Using a union bound, we obtain the claimed result.  
\end{proof}

Recall that although \cref{Lemma:orootn} brings $X_t$ and $Y_t$ within distance $o(\sqrt{n})$ of one another in a negligible amount of time,  it is possible for $X_t$ and $Y_t$ to deviate from $n/2$ by more than $O(\sqrt{n})$. 
The idea in \cref{prop:returntomean} is that, even if this does happen, by taking another negligible number of steps we can ensure that with high probability the copies will return to the desired distance apart within the desired  $O(\sqrt{n})$ window about $n/2$.   

%is that while $(X_t)$ and $(Y_t)$ are coupled, their distance cannot increase. 
%However, they will drift back toward their means with high probability with additional time. 

\begin{Proposition}\label{prop:returntomean}
Suppose \cref{assump:1} is satisfied and that $(X_t)$ and $(Y_t)$ are coupled as in \cref{coupling} with 
\begin{align*}
X_0,Y_0\in \left(\tfrac{n}{2}-\kappa_3\sqrt{n}(\log n)^{\frac{|P_{\lambda}|}{2}},\tfrac{n}{2}+\kappa_3\sqrt{n}(\log n)^{\frac{|P_{\lambda}|}{2}}\right) \qquad \text{ and } \qquad |X_0-Y_0| \leq \frac{\sqrt{n}}{\log\log n}
\end{align*}
for some $\kappa_3\in (0,\infty)$. 
For $\kappa_4\in (0,\infty)$, define the stopping time 
\[\tau_4(\kappa_4):= \min \left\{ t:|X_t-Y_t|\leq \frac{\sqrt{n}}{\log \log n}\,\,\,\text{ and }\,\,\, X_t,Y_t\in \left(\tfrac{n}{2}-\kappa_4\sqrt{n},\tfrac{n}{2}+\kappa_4\sqrt{n}\right )\right\}.\]
Then we have 
\[\PP\left(\tau_4(\kappa_4)>2s_n \right)\lesssim \frac{1}{\kappa_4^2}\]
where $s_n$ is as in~\eqref{def:times}.  
\end{Proposition}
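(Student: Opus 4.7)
My plan is to adapt the Chebyshev-style argument from~\cref{Lemma:Orootn}, leveraging the key observation that~\cref{coupling} is distance non-increasing: $|X_{t+1}-Y_{t+1}|\le |X_t-Y_t|$. Since $|X_0-Y_0|\le \sqrt{n}/\log\log n$ by hypothesis, this distance condition automatically persists for every $t\ge 0$, so the first defining condition of $\tau_4(\kappa_4)$ is satisfied at every time $t$. It will therefore suffice to show that the second condition---both chains within $\kappa_4\sqrt{n}$ of $n/2$---holds at the single time $t=2s_n$ with probability at least $1-O(1/\kappa_4^2)$. By the union bound,
\begin{align*}
\PP(\tau_4(\kappa_4)>2s_n)\le \PP\bigl(|X_{2s_n}-\tfrac{n}{2}|>\kappa_4\sqrt{n}\bigr)+\PP\bigl(|Y_{2s_n}-\tfrac{n}{2}|>\kappa_4\sqrt{n}\bigr),
\end{align*}
and I need only bound each term by $\lesssim 1/\kappa_4^2$.

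For each term I will bound the second moment $\E f_1(X_{2s_n})^2$ using~\cref{lem:1} and~\cref{lem:2}, then invoke Markov's inequality. The equivalence $|X_t-n/2|>\kappa_4\sqrt{n}$ iff $f_1(X_t)^2>4\kappa_4^2/n$ yields
\begin{align*}
\PP\bigl(|X_{2s_n}-\tfrac{n}{2}|>\kappa_4\sqrt{n}\bigr)\le \frac{n}{4\kappa_4^2}\,\E f_1(X_{2s_n})^2.
\end{align*}
By relation~\eqref{id:2}, $\E f_1(X_{2s_n})^2=\tfrac{1}{2n-1}+\tfrac{2n-2}{2n-1}f_2(k)^{2s_n}f_2(X_0)$. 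Applying~\eqref{eqn:f2as} with $q=|P_\lambda|$ gives $f_2(X_0)=O((\log n)^{|P_\lambda|}/n)$, since $|X_0-n/2|\le \kappa_3\sqrt{n}(\log n)^{|P_\lambda|/2}$; meanwhile~\eqref{as:id2} combined with~\cref{assump:1}\textbf{(c2)} yields $f_2(k)^{2s_n}\cong(1-2\lambda)^{4s_n}=(\log n)^{-2|P_\lambda|}$. Multiplying produces $f_2(k)^{2s_n}f_2(X_0)=O((\log n)^{-|P_\lambda|}/n)=o(1/n)$, so $\E f_1(X_{2s_n})^2\lesssim 1/n$ for $n$ large, and hence $\PP(|X_{2s_n}-\tfrac{n}{2}|>\kappa_4\sqrt{n})\lesssim 1/\kappa_4^2$. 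The symmetric calculation for $(Y_t)$ completes the bound.

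The main subtlety I expect is the choice of window length $2s_n$. The initial deviation from $n/2$ has been inflated by a factor $(\log n)^{|P_\lambda|/2}$ compared to the setting of~\cref{Lemma:Orootn}, so after squaring, $f_2(X_0)$ acquires an extra factor $(\log n)^{|P_\lambda|}$ relative to the $O(1/n)$ bound available there. Each $s_n$ steps contribute a contraction factor $(\log n)^{-|P_\lambda|}$ through $f_2(k)^{s_n}\cong(1-2\lambda)^{2s_n}$; running for $2s_n$ steps therefore produces $(\log n)^{-2|P_\lambda|}$, which strictly dominates the enlargement in $f_2(X_0)$ and forces $f_2(k)^{2s_n}f_2(X_0)=o(1/n)$ uniformly in $\kappa_3$ for $n$ large. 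At only $s_n$ steps the two logarithmic factors would exactly cancel, leaving a $\kappa_3$-dependent constant that would violate the $\lesssim$ convention. Beyond this bookkeeping, the argument reduces to a direct application of the lemmas already established.
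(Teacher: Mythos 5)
Your proposal is correct and follows essentially the same route as the paper: observe that the coupling's monotone non-increasing distance makes the $|X_t-Y_t|$ condition automatic, reduce $\PP(\tau_4>2s_n)$ via a union bound and Chebyshev to controlling $\E f_1(X_{2s_n})^2$ and $\E f_1(Y_{2s_n})^2$, then invoke \eqref{id:2}, \eqref{eqn:f2as} with $q=|P_\lambda|$, and \eqref{as:id2} to see that $f_2(k)^{2s_n}f_2(X_0)=O\bigl(n^{-1}(\log n)^{-|P_\lambda|}\bigr)$ so the second moment is $O(n^{-1})$. Your closing remark correctly identifies why $2s_n$ (rather than $s_n$) steps are needed to absorb the $\kappa_3$-dependent $(\log n)^{|P_\lambda|}$ inflation in $f_2(X_0)$, matching the paper's bookkeeping.
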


\begin{proof}
By construction of the \cref{coupling}, $t\mapsto |X_t -Y_t|$ is monotone decreasing.  Hence, by the hypothesis on the initial condition we have \[|X_t-Y_t|\leq |X_0-Y_0| \leq \frac{\sqrt{n}}{\log \log n}\]
for all $t\geq 0$.  
In particular, \[\tau_4(\kappa_4)= \min \left\{ t: X_t,Y_t\in \left(\frac{n}{2}-\kappa_4\sqrt{n},\frac{n}{2}+\kappa_4\sqrt{n}\right )\right\}.\]
Using \cref{lem:1} with~\cref{lem:2} gives
\begin{align}
   \nonumber \E_{X_0} f_1(X_{2s_n})^2 &= \frac{1}{2n-1} + \frac{2n-2}{2n-1} f_2(k)^{2s_n} f_2(X_0)\\
  \label{eqn:cong1} & \cong \frac{1}{2n-1} + \frac{2n-2}{2n-1} (1-2\lambda)^{4s_n} f_2(X_0).  
    \end{align}
    By hypothesis on $X_0$, it follows by~\cref{lem:2} that
    \begin{align*}
    f_2(X_0) = O\Big( \frac{(\log n)^{|P_\lambda|}}{n}\Big).  
    \end{align*}
    Plugging this into~\eqref{eqn:cong1} implies 
    \begin{align*}
   \E_{X_0} f_1(X_{2s_n})^2 &\lesssim \frac{1}{2n-1} + \frac{2n-2}{2n-1} O(n^{-1} (\log n)^{- |P_\lambda|})\\
   & = O(n^{-1})  
   \end{align*}   
with the same estimate holding true for $Y_{2s_n}$.  

%
%and using \cref{lem:1} and~\cref{lem:2} produces
%\begin{align*}
%   |\E_{X_0} f_1(X_{s_n})| &= \tfrac{2}{n}(1-\tfrac{2k}{n})^{2s_n}|X_0-\tfrac{n}{2}| \\
%    &\lesssim \tfrac{2 \kappa_3}{\sqrt{n}} (1-2\lambda)^{2s_n} (\log n)^{\frac{|P_\lambda|}{2}}  \\
%    &= \tfrac{2 \kappa_3}{\sqrt{n}}  (\log n)^{-\frac{|P_\lambda|}{2}} = o(1/\sqrt{n}),
%    \end{align*}
%with the same estimate holding true for $Y_t$.  

Combining these estimates with the monotonicity of the coupling along with a union bound and Chebychev's inequality then gives 
\begin{align*}
\PP_{X_0,Y_0}(\tau_4(\kappa_4)>2 s_n)&\leq \PP_{X_0,Y_0}\left(|X_{2s_n}-\tfrac{n}{2}|\vee |Y_{2s_n}-\tfrac{n}{2}|> \kappa_4\sqrt{n}\right)\\
&=\PP_{X_0,Y_0}\left(|f_1(X_{2s_n})|\vee |f_2(Y_{2s_n}) |> 2\kappa_4/\sqrt{n}\right) \lesssim  \frac{1}{\kappa_4^2},  
\end{align*}
finishing the proof.  
%Now by \cref{lemma:chebysetup}, for $t\geq 0$, we have $\Var_{X_0}(X_t)\leq n$.
%Thus, by Chebyshev's inequality, \[\PP_{X_0}(|X_{t_2}-\E_{X_0}(X_{t_2})|>\tfrac{1}{2}\kappa_4\sqrt{n})\lesssim\frac{Var_{X_0}(X_t)}{\kappa_4^2n}\leq \frac{1}{\kappa_4^2}.\] 
%Since the same bounds hold for $(Y_t)$, we are done. 
\end{proof}

\section{The upper bound on the mixing time}
\label{sec:upperbound}

If $X$ is a random variable, then we let $\mu_X$ denote its distribution.  The goal of this section is to show that if $X_0-Y_0=o(\sqrt{n})$ and $X_0=\tfrac{n}{2}+O(\sqrt{n})$, $Y_0=\tfrac{n}{2}+O(\sqrt{n})$ as $n\rightarrow \infty$, then 
\begin{align}
\label{eqn:TVo1}
\TV{P_1(X_0, \, \cdot \,) - P_1(Y_0, \, \cdot \,) }=\TV{\mu_{X_1}-\mu_{Y_1}} = o(1) \,\, \text{ as }\,\, n\rightarrow \infty.  
\end{align}
Combining this with the results of the previous section ultimately yields the desired upper bound on the mixing time.    

Our method to show~\eqref{eqn:TVo1} is quite different than the method used in~\cite{EN}.  To see why,   
let $\text{Hyper}(j, \ell, m)$ denote the hypergeometric distribution of $m$ objects selected without replacement from a total of $j$ objects, $\ell$ of which are of type 1 and $j-\ell$ are of type 2.  Also, let $\text{Bin}(j, r)$ denote the binomial distribution of $j$ trials with success probability $r$.  Observe that we can write 
\begin{align}
X_1-X_0= H_1 - H_0 \qquad \text{ and } \qquad Y_1 - Y_0 = H_3 - H_2 
\end{align}
where $H_1\sim \text{Hyper}(n,n-X_0, k)$ and $H_0\sim \text{Hyper}(n, X_0, k)$ while $H_3 \sim \text{Hyper}(n,n-Y_0, k)$ and $H_2\sim \text{Hyper}(n, Y_0, k)$.  Thus we wish to estimate
\begin{align}\label{eqn:hyperrep}
\TV{\mu_{X_1}-\mu_{Y_1}}= \TV{ \mu_{X_0+ H_1 - H_0} -\mu_{Y_0 +H_3- H_2} }.  
\end{align} 

In order to estimate~\eqref{eqn:hyperrep}, we employ a large $n$ comparison of the hypergeometric distributions $H_i$, $i=0,1,2,3$, in the total variation distance.  Because $k/n\rightarrow \lambda \in (0,1/2)$ instead of $k=o(n)$ as $n\rightarrow \infty$, we cannot appeal to the result of Diaconis and Freedman~\cite{DF_80}, which in particular shows that if $k=o(n)$, then 
\begin{align}
\label{eqn:hyperb}
\TV{\mu_{H_i} -\mu_B}\lesssim \frac{k}{n} + \sqrt{\frac{k}{n}}
\end{align}
where $B\sim \text{Bin}(k, 1/2)$.  Using normal approximation of the binomial, the desired result~\eqref{eqn:TVo1} follows~\cite[Lemma 16]{EN} in the case when $k=o(n)$ by using the triangle inequality several times in~\eqref{eqn:hyperrep}.       

Note that in our case, the righthand side of~\eqref{eqn:hyperb} is order $1$.  Hence, instead of using a binomial and then normal approximation, we manufacture a discrete version of the normal that approximates the $H_i$'s in total variation.  Combining this approximation with the triangle inequality a few times in~\eqref{eqn:hyperrep}, we will then be able to conclude~\eqref{eqn:TVo1}.        

\subsection{The discrete normal distribution}
In what follows, we will use $\phi :\mathbf{R}\rightarrow \mathbf{R}$ to denote the probability density function of the standard normal on $\mathbf{R}$; that is,
\begin{align}
\phi(x) = \frac{e^{-\frac{x^2}{2}}}{\sqrt{2\pi}}, \qquad x\in \mathbf{R}. 
\end{align}

\begin{Definition}
We say that a random variable $Z$  distributed on the integers $\mathbf{Z}$ has a \emph{discrete normal distribution} with parameters $x\in \mathbf{R}$, $s>0$ and finite set $\cals  \subset \mathbf{Z}$, denoted by $Z \sim \text{dN}(x, s, \mathcal{S})$, if 
\begin{align*}
\PP(Z= j)  = 
\begin{cases}
\frac{1}{\mathcal{N}}\frac{\phi \big( \frac{j-x}{s}\big)}{s} & \text{ if } \,\, j \in \cals\\
0 & \text{ if } j \notin \cals
\end{cases}
\end{align*}
where $\mathcal{N}:= \sum_{j\in \cals} \frac{\phi \big( \frac{j-x}{s}\big)}{s}$ is the normalization constant.  \end{Definition}

\begin{Remark}
Depending on the choice of parameters $x\in \mathbf{R}$, $s>0$  and finite set $\cals$, the name \emph{discrete normal} above is a bit of a misnomer.  Indeed, unless properly shifted and scaled with a large enough set $\cals$, a random variable $Z\sim \text{dN}(x,s, \cals)$ may have little to do with the usual normal distribution.  All of the discrete normals used below, however, will indeed be reminiscent of a normal distribution due to the particular choices of $x,s$ and $\cals$.     
\end{Remark}

In order to setup and prove the next result, set $\calx_k= \{0,1,2,\ldots, k\}$, fix $\ell \in \calx$ and define parameters
\begin{align}
\label{eqn:not1}
p:=\frac{\ell}{n},\, \,\, f:= \frac{k}{n},\, \,\,q:=1-p, \,\,\, \sigma:= \sqrt{k pq (1-f)}.
\end{align}
Furthermore, for $j\in \calx_k$ set
\begin{align}
\label{eqn:not2}
 x_j := \frac{j - kp}{\sqrt{k p q}} \qquad \text{ and } \qquad \tilde{x}_j := \frac{x_j}{\sqrt{1-f}}= \frac{j-kp}{\sigma}.  
\end{align}
We first need a technical lemma concerning the behavior of the normalization constant for a particular discrete normal. 
\begin{Lemma}
\label{lem:normalization}
Given the choice of parameters in~\eqref{eqn:not1} and~\eqref{eqn:not2}, suppose \cref{assump:1} is satisfied and that $\ell=n/2+O(\sqrt{n})$.  Then $$\mathcal{N}_n:=\sum_{j\in \calx_k}\frac{\phi(\tilde{x}_j)}{\sigma}=1 +O(n^{-1/2})\,\,\, \text{ as }\,\,\, n\rightarrow \infty.$$ 
\end{Lemma}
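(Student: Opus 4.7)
The plan is to recognize $\mathcal{N}_n$ as a Riemann sum with mesh $h = 1/\sigma$ for the integral $\int_{\mathbf{R}} \phi(x)\,dx = 1$. Indeed, since $\tilde{x}_{j+1} - \tilde{x}_j = 1/\sigma$, we may write
\[
\mathcal{N}_n \;=\; \sum_{j=0}^{k} \phi(\tilde{x}_j)\, h,
\]
which is a Riemann-type sum on the partition $\{\tilde{x}_0, \tilde{x}_1, \ldots, \tilde{x}_k\}$ of the interval $[\tilde{x}_0, \tilde{x}_k]$, namely $\mathcal{N}_n = T_h + \tfrac{h}{2}(\phi(\tilde{x}_0) + \phi(\tilde{x}_k))$ where $T_h$ is the trapezoidal-rule approximation of $\int_{\tilde{x}_0}^{\tilde{x}_k}\phi(x)\,dx$.

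The first step is to record uniform size estimates for the ingredients. By condition (c1) in \cref{assump:1}, $f = k/n$ is bounded away from both $0$ and $1/2$, and the hypothesis $\ell = n/2 + O(\sqrt{n})$ gives $p = 1/2 + O(n^{-1/2})$ and hence $q = 1/2 + O(n^{-1/2})$. Consequently $pq(1-f)$ is bounded above and below by positive constants independent of $n$, so $\sigma = \sqrt{kpq(1-f)} \cong \sqrt{n}$, $h = 1/\sigma = O(n^{-1/2})$, and the endpoints satisfy $\tilde{x}_0 = -kp/\sigma \cong -\sqrt{n}$ and $\tilde{x}_k = kq/\sigma \cong \sqrt{n}$, so that the interval $[\tilde{x}_0, \tilde{x}_k]$ has length of order $\sqrt{n}$.

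The second step is to control two sources of error. For the tail, superexponential decay of $\phi$ yields $\int_{[\tilde{x}_0,\tilde{x}_k]^c}\phi(x)\,dx = O(e^{-cn})$ for some $c>0$. For the quadrature error on the finite interval, the classical trapezoidal-rule bound applied to the $C^2$ function $\phi$ gives
\[
\Big|\, T_h - \int_{\tilde{x}_0}^{\tilde{x}_k} \phi(x)\,dx \,\Big|\;\leq\; \frac{\tilde{x}_k - \tilde{x}_0}{12}\, h^2\, \|\phi''\|_{\infty} \;=\; O\!\big(\sqrt{n}\cdot n^{-1}\big) \;=\; O(n^{-1/2}).
\]
The discrepancy $|\mathcal{N}_n - T_h| = \tfrac{h}{2}|\phi(\tilde{x}_0) + \phi(\tilde{x}_k)|$ is of order $h \cdot e^{-cn}$ and hence negligible. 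Combining these three estimates and the triangle inequality yields $\mathcal{N}_n = 1 + O(n^{-1/2})$, as claimed.

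No step constitutes a serious obstacle; the argument is essentially a bookkept Riemann-sum comparison of $\sum \phi(\tilde{x}_j)/\sigma$ with $\int \phi$. The only care required is to ensure that the implicit constants in $\sigma \cong \sqrt{n}$ and $|\tilde{x}_0|, |\tilde{x}_k| \cong \sqrt{n}$ are uniform in $n$, which is exactly what \cref{assump:1}(c1) together with the standing hypothesis $\ell = n/2 + O(\sqrt{n})$ delivers.
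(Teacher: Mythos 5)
Your proof is correct and takes a genuinely different route from the paper's. You identify $\mathcal{N}_n$ as a trapezoidal-type sum and invoke the classical quadrature error bound $|T_h - \int \phi| \leq \tfrac{(b-a)h^2}{12}\|\phi''\|_\infty$, which gives $O(\sqrt{n}\cdot n^{-1}) = O(n^{-1/2})$ at once, together with exponentially small tail and endpoint corrections. The paper instead splits the sum at the mode $\lfloor kp\rfloor$ and uses monotonicity on either side to sandwich the sum between shifted integrals: the upper bound is immediate, and the lower bound is obtained by restricting to $\int_0^k$ and controlling the missing tails via Chebyshev's inequality applied to $\mathcal{Z}^2$, yielding $O(1/n)$ there. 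Your approach is more structured and leans on smoothness of $\phi$ through the trapezoidal error formula, and it produces a sharper (exponential) tail estimate; the paper's approach is more elementary, needing only unimodality of $\phi$ and no second-derivative bound. Both deliver the same $O(n^{-1/2})$ rate, driven in each case by the $1/\sqrt{2\pi\sigma^2} \cong n^{-1/2}$ term, and both correctly use \cref{assump:1}(c1) and $\ell = n/2 + O(\sqrt{n})$ to pin $\sigma \cong \sqrt{n}$ uniformly.
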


\begin{proof}
The proof of this result follows by integral comparison. Note that 
\begin{align*}
\mathcal{N}_n &\leq \frac{1}{\sqrt{2\pi \sigma^2}}+ \sum_{j=0}^{\lfloor kp\rfloor-1}\frac{ e^{-\frac{1}{2}\big(\frac{j-kp}{\sigma} \big)^2}}{\sqrt{2\pi \sigma^2}} + \sum_{j=\lfloor kp\rfloor +1}^k \frac{ e^{-\frac{1}{2}\big(\frac{j-kp}{\sigma} \big)^2}}{\sqrt{2\pi \sigma^2}}\\
& \leq \int_\mathbf{R} \frac{ e^{-\frac{1}{2}\big(\frac{x-kp}{\sigma} \big)^2}}{\sqrt{2\pi \sigma^2}} \, dx + \frac{1}{\sqrt{2\pi \sigma^2}}=1 +   \frac{1}{\sqrt{2\pi \sigma^2}}.   
\end{align*} 
Given the asymptotic behavior of $\sigma$, this finishes the proof of the upper bound.  To obtain the lower bound, note that 
\begin{align*}
\mathcal{N}_n &\geq  \sum_{j=1}^{\lfloor kp\rfloor }\frac{ e^{-\frac{1}{2}\big(\frac{j-kp}{\sigma} \big)^2}}{\sqrt{2\pi \sigma^2}} + \sum_{j=\lfloor kp\rfloor}^k \frac{ e^{-\frac{1}{2}\big(\frac{j-kp}{\sigma} \big)^2}}{\sqrt{2\pi \sigma^2}} - \frac{1}{\sqrt{2\pi \sigma^2}} \\
& \geq \int_0^k \frac{ e^{-\frac{1}{2}\big(\frac{x-kp}{\sigma} \big)^2}}{\sqrt{2\pi \sigma^2}} \, dx  - \frac{1}{\sqrt{2\pi \sigma^2}}\\
&= 1- \int_{-\infty}^{-kp/\sigma} \phi(x) \, dx - \int_{kq/\sigma}^\infty \phi(x) \, dx - \frac{1}{\sqrt{2\pi \sigma^2}}. \end{align*}
Now if $\mathcal{Z}$ is a standard normal random variable on $\mathbf{R}$, we note that 
\begin{align*}
\mathcal{N}_n &\geq 1- \int_{-\infty}^{-kp/\sigma} \phi(x) \, dx - \int_{kq/\sigma}^\infty \phi(x) \, dx - \frac{1}{\sqrt{2\pi \sigma^2}} \\
&=1- \PP(\mathcal{Z}\leq -k p/\sigma) -\PP(\mathcal{Z} \geq kq/\sigma)- \frac{1}{\sqrt{2\pi \sigma^2}}. 
\end{align*}
By Chebychev's inequality on the square $\mathcal{Z}^2$, we see that both $\PP(\mathcal{Z}\leq -kp/\sigma)$ and $\PP(\mathcal{Z} \geq kq/\sigma)$ are order $1/n$ as $\ell=n/2 + O(\sqrt{n})$. We thus conclude the result.  
\end{proof}

Before stating the main comparison lemma, we will use a result from \cite{LCM_07}.
Though the following lemma is not stated explicitly in \cite{LCM_07}, the proof of it is contained within the proof of \cite[Theorem~1]{LCM_07}.
In particular, we will restate the assumptions that are required for \cite[Equations~(4.28)-(4.29)]{LCM_07}.

\begin{Lemma}\label{StatLemma}
Suppose $0<f<1$, $0<p<1$, and $6(kp\wedge kq)\geq 1$ where $f= k/n,$ $p= \ell / n$, and $q= 1-p$. 
Let \[L = \inf\{j\in \Z_+ :\tilde x_j\geq -\delta \sigma\} \quad \text{and}\quad J_x = \lfloor kp - x\sigma \rfloor\] for $x\in \RR$. 
For any $\delta \in (0,1/2]$ and $x\in [-\delta \sigma, 0]$, we have 
\[ \sum_{j=L}^{J_x}|\PP(H=j)-\mu(j)|\leq \frac{C}{\sigma (1-f)}[(1+x^2)\exp(-0.07x^2).\]
\end{Lemma}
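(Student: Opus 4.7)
The plan is to follow the Stirling-based local expansion developed in the proof of Theorem~1 of~\cite{LCM_07}, specifically the calculations culminating in equations~(4.28)--(4.29) there. In the present notation $\PP(H=j)$ denotes the hypergeometric point mass $\binom{\ell}{j}\binom{n-\ell}{k-j}/\binom{n}{k}$, and $\mu(j) = \phi(\tilde x_j)/\sigma$ is the corresponding discrete normal weight. The goal is to establish a Gaussian-tailed pointwise comparison of $\PP(H=j)$ with $\mu(j)$ on the range $j\in [L, J_x]$, which corresponds to standardized values $\tilde x_j \in [-\delta\sigma, x]$ with $x\le 0$, and then to sum those pointwise bounds.

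First, I would apply Stirling's formula with its standard quantitative remainder to each factorial appearing in the binomial coefficients that make up $\PP(H=j)$ and $\mu(j)$. After cancellation, $\log \PP(H=j) - \log \mu(j)$ decomposes into logarithmic terms of the form $j\log(j/(kp))$ and $(k-j)\log((k-j)/(kq))$, together with the analogous $\ell$- and $(n-\ell)$-dependent contributions, plus explicit Stirling remainders. The hypothesis $6(kp\wedge kq)\geq 1$ is exactly what is needed to keep these remainders uniformly small over the entire index range $[L, J_x]$.

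Next, I would Taylor expand each logarithmic term in powers of $(j-kp)/(kp) = O(\tilde x_j/\sigma)$, using $|\tilde x_j|\le \delta\sigma$ with $\delta\le 1/2$ to guarantee that the expansions converge with controlled error. The quadratic contributions are precisely those that reassemble the Gaussian exponent inside $\mu(j)$ and cancel against it. The leading residual is a cubic-in-$\tilde x_j$ contribution of order $|\tilde x_j|^3/\sigma$, and the factor $1/(1-f)$ enters through the change of standardization $x_j = \tilde x_j \sqrt{1-f}$. Exponentiating then yields a pointwise estimate
\[
|\PP(H=j) - \mu(j)| \;\lesssim\; \frac{\mu(j)\bigl(1+\tilde x_j^{\,2}\bigr)}{\sigma(1-f)}
\]
that holds uniformly in $j\in [L, J_x]$.

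Finally, I would sum this pointwise bound over $j=L,\ldots, J_x$ and compare the resulting Riemann sum to the Gaussian integral $\sigma^{-1}(1-f)^{-1}\int_{-\delta\sigma}^{x} \phi(u)(1+u^2)\,du$, which for $x\le 0$ is dominated by its upper-endpoint behaviour and obeys $\int_{-\infty}^{x}\phi(u)(1+u^2)\,du \lesssim (1+x^2)e^{-x^2/2}$. The specific constant $0.07$ in the exponent is a slack factor absorbed in pulling the polynomial prefactor $1+u^2$ into a pure exponential via the elementary inequality $(1+u^2)e^{-(1/2 - 0.07)u^2} \le C$. The main obstacle is the careful book-keeping of the Stirling remainders uniformly across $[L, J_x]$, which is precisely where the hypothesis $6(kp\wedge kq)\geq 1$ gets used; once that book-keeping is in hand, the present lemma follows by extracting the corresponding steps of~\cite[Eqs.~(4.28)--(4.29)]{LCM_07}.
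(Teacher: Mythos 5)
The paper offers no self-contained proof of this lemma: it states only that the bound is read off from the proof of \cite[Theorem~1]{LCM_07}, with the listed hypotheses ($0<f<1$, $0<p<1$, $6(kp\wedge kq)\geq 1$, $\delta\in(0,1/2]$) being exactly those needed for \cite[Eqs.~(4.28)--(4.29)]{LCM_07}. Your proposal defers to the same source and gives a plausible high-level reconstruction of the Stirling-plus-Taylor local-expansion argument behind it, so you are taking essentially the same route as the paper.

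One small caveat worth noting before you work through \cite{LCM_07} in detail: as written, $J_x=\lfloor kp-x\sigma\rfloor$ with $x\leq 0$ gives $\tilde x_{J_x}\approx -x\geq 0$, so the summation range is $\tilde x_j\in[-\delta\sigma,-x]$ rather than $[-\delta\sigma,x]$ as you describe; the number of summands then \emph{grows} as $|x|$ grows, which is incompatible with an upper bound $(1+x^2)e^{-0.07x^2}$ that eventually decays in $|x|$, strongly suggesting a sign slip in the paper's transcription of $J_x$. Your interpretation of the range is the sensible one. Also, the constant $0.07$ is not there to absorb the polynomial prefactor $1+u^2$ into the exponential as you suggest (the stated bound retains the factor $(1+x^2)$); it is simply a weakened Gaussian exponent that leaves slack to control the lower-order Stirling and Taylor remainders uniformly over the range.
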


We will use the previous lemma in the next result, which provides the critical comparison.  

\begin{Proposition}\label{hyper}
Given the choice of parameters in~\eqref{eqn:not1} and~\eqref{eqn:not2}, suppose \cref{assump:1} is satisfied.  Let $H\sim \text{\emph{Hyper}}(n, \ell,k)$ and $Z \sim \text{\emph{dN}}(kp, \sigma, \calx_k)$ with $\ell= n/2+O(\sqrt{n})$.  Then 
\begin{align}
\TV{\mu_H-\mu_Z}= O(n^{-1/2}) \,\,\text{ as } \,\, n \rightarrow \infty. 
\end{align}    
\end{Proposition}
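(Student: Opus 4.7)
The plan is to introduce the unnormalized Gaussian weight $\tilde\mu(j) := \phi(\tilde x_j)/\sigma$, so that $\PP(Z=j) = \tilde\mu(j)/\mathcal N_n$, and then apply the triangle inequality:
\begin{align*}
\sum_{j \in \calx_k} |\PP(H=j) - \PP(Z=j)| \leq \sum_{j \in \calx_k} |\PP(H=j) - \tilde\mu(j)| + |1 - \mathcal N_n^{-1}| \sum_{j \in \calx_k} \tilde\mu(j).
\end{align*}
The last term equals $|\mathcal N_n - 1| = O(n^{-1/2})$ by \cref{lem:normalization}, so it suffices to bound the first sum by $O(n^{-1/2})$.

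For that sum I would split $\calx_k$ into a central block around $kp$ and two extreme tails. Under \cref{assump:1} together with $\ell = \tfrac{n}{2} + O(\sqrt n)$, we have $p, q = \tfrac{1}{2} + O(n^{-1/2})$ with $1 - f$ bounded away from $0$, so $\sigma^2 \cong n$ and $\sigma(1-f) \cong \sqrt n$. Fix $\delta \in (0, 1/2]$ and let $L$ be as in \cref{StatLemma}, together with the symmetric right endpoint $R := \sup\{j \leq k : \tilde x_j \leq \delta \sigma\}$. Applying \cref{StatLemma} with $x = 0$ covers the left central block $[L, \lfloor kp \rfloor]$ with error $C/(\sigma(1-f)) = O(n^{-1/2})$. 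For the right block $[\lfloor kp \rfloor + 1, R]$, I would use the distributional identity $k - H \sim \text{\emph{Hyper}}(n, n-\ell, k)$ (which swaps $p \leftrightarrow q$ while preserving $\sigma$, $f$, and the weight $\tilde\mu$ by evenness of $\phi$) to apply \cref{StatLemma} to the reflected variable, giving the same $O(n^{-1/2})$ bound.

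For the extreme tails $\{0, \dots, L-1\}$ and $\{R+1, \dots, k\}$, I would bound $|\PP(H=j) - \tilde\mu(j)| \leq \PP(H=j) + \tilde\mu(j)$ and control each summand separately. The Gaussian contribution $\sum_{j < L} \tilde\mu(j)$ is controlled by $\PP(\mathcal Z \leq -\delta \sigma) + O(\sigma^{-1})$ via an integral comparison as in the proof of \cref{lem:normalization}, which is exponentially small in $\sigma^2 \cong n$. For the hypergeometric, Hoeffding's inequality for sampling without replacement gives $\PP(|H - kp| \geq \delta \sigma^2) = O(e^{-c\sigma^2})$, again exponentially small. Together the tail contributions are $o(n^{-1/2})$, and combining with the central blocks yields $\TV{\mu_H - \mu_Z} = O(n^{-1/2})$.

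The main obstacle is the careful bookkeeping needed to apply \cref{StatLemma} and its reflected counterpart: in particular verifying the hypotheses $6(kp \wedge kq) \geq 1$ and $\delta \in (0, 1/2]$ hold under \cref{assump:1} (both are automatic for $n$ large since $p, q \to 1/2$), and tracking the off-by-one between left and right central blocks under the reflection $j \mapsto k - j$ so that no index is double-counted or skipped. A smaller but necessary verification is that $L > 0$ for large $n$, so that the left tail is genuinely nonempty and requires the exponential estimate above; this follows because $\delta \sigma^2 \leq \tfrac{1}{2} k p q (1-f) < kp$ in our parameter regime.
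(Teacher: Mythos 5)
Your proposal follows essentially the same route as the paper's proof: split the sum at $L$ and $R$ as in \cref{StatLemma}, invoke \cref{StatLemma} together with the reflection $j\mapsto k-j$ (the paper's ``by symmetry'') for the central block, use Hoeffding for the hypergeometric tails and Gaussian decay for the discrete-normal tails, and absorb the normalization constant via \cref{lem:normalization}. The only organizational difference is that you peel off the $\mathcal N_n$ factor up front by triangle inequality rather than inside the central-block estimate, which is a cosmetic reordering, not a different argument.
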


\begin{proof}
Using the notation above in~\eqref{eqn:not1} and~\eqref{eqn:not2}, we first introduce some further notation that helps connect with the setup in~\cite{LCM_07}.  Let  
\begin{align*}
\bar{f}= f\wedge (1-f), \,\,  a:= \frac{\bar{f} +4}{4(1-\bar{f})}, \,\, \, \delta := \frac{1}{10(a\vee 2)} 
\end{align*}
and define
\[L=\inf\{ j \in \calx_k \,: \,\tilde x_j\geq -\delta\sigma\} \qquad\text{and} \qquad R=\sup\{j \in \calx_k \,:\, \tilde x_j\leq \delta\sigma\}.\]
Setting $\mu(j)= \PP(Z=j)$, we find that 
\begin{align*}
    2\TV{\mu_H-\mu_Z}&=\sum_{j=0}^{L-1}|\PP(H=j)-\mu(j))|+\sum_{j=L}^{R}|\PP(H=j)-\mu(j)|+\sum_{\ell=R+1}^{k}|\PP(H=j)- \mu(j)|\\
    & \leq \sum_{j=0}^{L-1}(\PP(H=j)+\mu(j))+\sum_{j=L}^{R}|\PP(H=j)-\mu(j)|+\sum_{\ell=R+1}^{k}(\PP(H=j)+\mu(j))\\
    & \leq \sum_{j=L}^{R}|\PP(H=j)-\mu(j)|+ \frac{2k e^{-\delta^2 \sigma^2}}{\mathcal{N}_n\sqrt{2\pi\sigma}} + \PP(H < L) + \PP(H > R).
    \end{align*}
    %Now it follows by symmetry and~\cite[(4.31)]{stat} \textcolor{red}{replace the tail estimates below with some classical, well-known tails estimates for the hypergeometric distribution, ultimately not using the stats paper} with our choice of parameters
    %\begin{align*}
    %P(H<L) + P(H>R) \leq \frac{C}{\delta^2 q \sigma(1-f)}\lesssim \frac{1}{\sqrt{n}}.  
    %\end{align*}
    Using the well known tail bound in \cite{Hoeffding_1963}, we arrive at 
    \begin{align*}
    P(H<L) + P(H>R) \leq 2e^{-2k\left(\delta pq(1-f)\right)^2}\lesssim \frac{1}{\sqrt{n}}. 
    \end{align*}
    Furthermore, it follows from symmetry, \cref{lem:normalization}, and \cref{StatLemma} that
    \begin{align*}
     \sum_{j=L}^{R}|\PP(H=j)-\mu(j)| \leq2\sum_{j=L}^{\lfloor kp\rfloor}\bigg|\PP(H=j)-\frac{\phi((j-kp)/\sigma)}{\sigma}\bigg| + O(n^{-1/2})\leq \frac{C}{\sigma} \lesssim \frac{1}{\sqrt{n}}.  
    \end{align*}
    This concludes the proof.
\end{proof}

\subsection{Using the hypergeometric comparison}
Let us now combine the previous result with~\eqref{eqn:hyperrep} in order to see what we have left to estimate.  To this end, let $Z_i \sim \text{dN}(k p_i, \sigma_i, \calx_k), i=0,1,2,3$, with
\begin{align}
\label{eqn:not3}
\ell_0= X_0, \,\,\, \ell_1=n-X_0, \,\,\, \ell_2= Y_0, \,\,\, \ell_3=n-Y_0, \,\,\, p_i=\ell_i/n,
\end{align}  
and 
\begin{align}
\label{eqn:not4}
\sigma_i = \sqrt{kp_i (1-p_i) (1-k/n)}, i=0,1,2,3. 
\end{align}
Note that if $\eta= X_0-Y_0$, then by the triangle inequality and properties of the total variation distance of random variables distributed on $\mathbf{Z}$
\begin{align*}
\TV{\mu_{X_1}- \mu_{Y_1}} &= \TV{\mu_{X_0+ H_1-H_0}- \mu_{Y_0+ H_3-H_2} }\\
    & \leq \TV{\mu_{X_0 + H_1-H_0} - \mu_{X_0+ Z_1-Z_0} } + \TV{\mu_{X_0 +Z_1 -Z_0}- \mu_{Y_0 +Z_3-Z_2}}\\
    & \qquad + \TV{\mu_{Y_0 +Z_3 -Z_2}- \mu_{Y_0 +H_3-H_2}}\\
    &= \TV{\mu_{H_1-H_0} - \mu_{Z_1-Z_0} } + \TV{\mu_{\eta+Z_1 -Z_0}- \mu_{Z_3-Z_2}}\\
    & \qquad + \TV{\mu_{Z_3 -Z_2}- \mu_{H_3-H_2}}\\
    & \leq \sum_{i=0}^3\TV{\mu_{H_i}- \mu_{Z_i}} + \| \mu_{\eta + Z_1}- \mu_{Z_3} \|_{TV} + \|\mu_{Z_0}- \mu_{Z_2} \|_{TV}.  
    \end{align*}
    By \cref{hyper}, 
    \begin{align*}
    \sum_{i=0}^3\TV{\mu_{H_i}- \mu_{Z_i}} = O(n^{-1/2}) 
        \end{align*}
   assuming $\eta=X_0-Y_0=o(\sqrt{n})$ and $X_0= \tfrac{n}{2}+O(\sqrt{n}), Y_0= \tfrac{n}{2}+O(\sqrt{n})$.     
Thus, we must bound 
\begin{align*}
\| \mu_{\eta + Z_1}- \mu_{Z_3} \|_{TV} + \|\mu_{Z_0}- \mu_{Z_2} \|_{TV}
\end{align*}
under the same assumptions.  
\begin{Lemma}\label{normal}
Suppose that \cref{assump:1} is satisfied and consider the parameters $\ell_i$, $p_i$, $\sigma_i$ as in \eqref{eqn:not3} and the random variables $Z_i \sim \text{\emph{dN}}(kp_i, \sigma_i, \calx_k)$.  If $\eta=X_0-Y_0=o(\sqrt{n})$ and $X_0=n/2+ O(n^{1/2})$, $Y_0= n/2+O(n^{1/2})$, then as $n\rightarrow \infty$ 
\begin{align}
\label{eqn:Zs}
\| \mu_{\eta + Z_1}- \mu_{Z_3} \|_{TV} + \|\mu_{Z_0}- \mu_{Z_2} \|_{TV} =o(1).
\end{align} 
%Suppose that $|X_0-Y_0|=o(\sqrt{n}).$
%Let $\mu$ and $\nu$ be given by 
%\[\mu(m) =\frac{\Phi(x_{m})}{\sigma_1}\]
%and 
%\[\nu(m)=\frac{\Phi(y_m)}{\sigma_3}\]
%where 
%$\Phi(x) =\frac{e^{-x^2/2}}{\sqrt{2\pi}}$,
%$x_{m} = \frac{m-kp_1}{\sigma_1}$ ($p_1=X_0/n$),
%and $y_m =\frac{m-kp_3}{\sigma_3}$ ($p_3=Y_0/n$).
%Then \[\TV{\mu-\nu} = o(1).\]
\end{Lemma}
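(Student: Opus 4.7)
The two summands in~\eqref{eqn:Zs} are structurally identical, so I will focus on $\|\mu_{Z_0}-\mu_{Z_2}\|_{TV}$; the same argument applies to $\|\mu_{\eta+Z_1}-\mu_{Z_3}\|_{TV}$ once the integer shift $\eta$ is absorbed into the mean of $Z_1$. The two distributions then have effective means differing by $\eta(1-f)=o(\sqrt{n})$ on partially overlapping supports whose symmetric difference (at most $2|\eta|$ integers near $0$ and $k$) carries only $O(|\eta|e^{-cn})$ probability, since both centers $m_i\approx k/2$ sit $\Theta(\sqrt{n})$ standard deviations inside $[0,k]$.

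\textbf{Step 1 (parameter asymptotics).} Writing $p_i=\tfrac{1}{2}+O(n^{-1/2})$, one gets $|m_0-m_2|=k|\eta|/n=o(\sqrt{n})$ and $p_iq_i=\tfrac{1}{4}+O(n^{-1})$, whence $\sigma_i^2=\tfrac{k(1-f)}{4}+O(1)$. Thus $\sigma_0,\sigma_2\cong\sqrt{n}$ and $|\sigma_0-\sigma_2|=O(n^{-1/2})$, so the mean gap and the standard-deviation gap are both $o(\sigma_i)$.

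\textbf{Step 2 (reduction to a continuous comparison).} Set $g_i(x):=\phi((x-m_i)/\sigma_i)/\sigma_i$ for $x\in\RR$, so that $\PP(Z_i=j)=g_i(j)/\mathcal{N}_i$ on $\calx_k$. By~\cref{lem:normalization}, $\mathcal{N}_0,\mathcal{N}_2=1+O(n^{-1/2})$, and the algebraic identity
\[
\frac{g_0(j)}{\mathcal{N}_0}-\frac{g_2(j)}{\mathcal{N}_2}=\frac{g_0(j)-g_2(j)}{\mathcal{N}_0}+g_2(j)\left(\frac{1}{\mathcal{N}_0}-\frac{1}{\mathcal{N}_2}\right)
\]
shows that the normalization contributes only $O(n^{-1/2})$ to the TV sum. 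A direct computation gives $\int_\RR|g_i'(x)|\,dx=O(1/\sigma_i)=O(n^{-1/2})$, so the total variation of $|g_0-g_2|$ on $[0,k]$ is $O(n^{-1/2})$. A Riemann-sum comparison of the kind used to prove~\cref{lem:normalization} then yields
\[
\sum_{j\in\calx_k}|g_0(j)-g_2(j)|=\int_0^{k}|g_0(x)-g_2(x)|\,dx+O(n^{-1/2}).
\]
Gaussian tails outside $[0,k]$ are exponentially small by Step~1, so the right-hand side equals $2\|N(m_0,\sigma_0^2)-N(m_2,\sigma_2^2)\|_{TV}+o(1)$.

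\textbf{Step 3 (continuous normal bounds).} By the triangle inequality,
\[
\|N(m_0,\sigma_0^2)-N(m_2,\sigma_2^2)\|_{TV}\leq\|N(m_0,\sigma_0^2)-N(m_2,\sigma_0^2)\|_{TV}+\|N(m_2,\sigma_0^2)-N(m_2,\sigma_2^2)\|_{TV}.
\]
The first term is dominated by $|m_0-m_2|/(\sigma_0\sqrt{2\pi})=o(1)$. For the second, Pinsker's inequality together with the closed-form KL divergence $\tfrac{1}{2}[\sigma_0^2/\sigma_2^2-1-\log(\sigma_0^2/\sigma_2^2)]$ between normals with common mean gives the bound $O(|\sigma_0^2-\sigma_2^2|/\sigma_0^2)=O(1/n)=o(1)$. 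Assembling Steps~1--3 yields $\|\mu_{Z_0}-\mu_{Z_2}\|_{TV}=o(1)$, and the analogous chain handles $\|\mu_{\eta+Z_1}-\mu_{Z_3}\|_{TV}$.

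\textbf{Main obstacle.} The delicate point is Step~2: turning the discrete sum $\sum_{j\in\calx_k}|g_0(j)-g_2(j)|$ into a continuous integral requires a genuinely uniform bound on the total variation of the piecewise-smooth function $|g_0-g_2|$ over $[0,k]$, not merely pointwise decay. The $L^1$-derivative estimate $\int_\RR|g_i'|\,dx=O(n^{-1/2})$ must be kept uniform across the full parameter range permitted by~\cref{assump:1}; once that is in place, the continuous normal bounds are standard and the result follows.
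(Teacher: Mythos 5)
Your proof is correct but proceeds quite differently from the paper's. The paper works entirely at the discrete level: it partitions the TV sum into four pieces $T_1,\dots,T_4$ — a bulk window $J_n(K)$, its complement, and the two boundary pieces coming from the mismatch of the (shifted) supports — then controls the bulk term $T_1$ by first showing $\sigma_1^{-1}-\sigma_3^{-1}=o(n^{-1})$ and then using the elementary inequality $|1-e^{-u}|\lesssim |u|$ on the bounded arguments that arise inside $J_n(K)$. You instead reduce the discrete TV to a continuous one: the key step is the quadrature estimate
\[
\Big|\sum_{j\in\calx_k}|g_0(j)-g_2(j)|-\int_0^{k}|g_0-g_2|\,dx\Big|\lesssim V_{[0,k]}\big(|g_0-g_2|\big)\lesssim \int_{\RR}|g_0'|+\int_{\RR}|g_2'|=O(\sigma^{-1}),
\]
which is correct but deserves to be stated in exactly this bounded-variation form (the citation to \cref{lem:normalization} is slightly loose, since that lemma exploits monotonicity of $\phi$ on each half-line rather than a BV bound, and $|g_0-g_2|$ is not monotone). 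Once in the continuous world you finish with two textbook facts about normals, a location-shift bound and Pinsker's inequality with the closed-form KL for a variance mismatch. Both routes give the same $o(1)$ conclusion; the paper's avoids any discretization error analysis but must do a hands-on exponential comparison in the bulk, while yours modularizes the estimate by handing the hard part to classical continuous-Gaussian results at the cost of having to justify the Riemann-sum step carefully (and uniformly in the parameters allowed by \cref{assump:1}), which you correctly flag as the delicate point. One tiny item to tighten: when you pass from $\sum g_2(j)|\mathcal{N}_0^{-1}-\mathcal{N}_2^{-1}|$ to $O(n^{-1/2})$ you are implicitly using $\sum_j g_2(j)=\mathcal N_2=1+O(n^{-1/2})$, which is exactly \cref{lem:normalization}; it is worth saying so explicitly.
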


%
%\textcolor{red}{JK: can someone point out where we use the fact that $|X_0-Y_0|=o(\sqrt{n})$? As best as I can tell, we use $|X_0-Y_0|=o(n)$ when we need that $p_1-p_3\to 0$. }
%
%\textcolor{red}{JK:  one of the brain teasers here is that we are converging without running time. we are converging as we run $n$ to infinity. }
%
%\textcolor{red}{JK: maybe the point of $|X_0-Y_0|=o(\sqrt{n})$ is to recover a rate of convergence. IE, maybe the fact that $p_1-p_3\to 0$ at rate $o(n^{-1/2})$ propagates through the proof.}
\begin{proof}
Note that it suffices to show that $\TV{\mu_{\eta+Z_1} - \mu_{Z_3}}\rightarrow 0$ as $n\rightarrow \infty$.   Let $\epsilon >0$, $\caly_\eta := \calx_k \cap (\calx_k + \eta)$ and $\mathcal{N}_{n,1}$ and $\mathcal{N}_{n,3}$ denote the respective normalization constants for $Z_1$ and $Z_3$.  Fixing a constant $K>0$ to be determined momentarily, let $J_n(K)=[\tfrac{k}{2}-K \sqrt{n}, \tfrac{k}{2}+ K \sqrt{n}]$.  Note we can write
\begin{align*}
2 \TV{\mu_{\eta+Z_1} - \mu_{Z_3}}&= \sum_{j\in \mathbf{Z}} | \PP(Z_1 = j- \eta) - \PP(Z_3=j) | \\
&= \sum_{j \in \caly_\eta\cap J_n(K)} \bigg|\frac{\phi\big(\frac{j-\eta-kp_1}{\sigma_1} \big)}{\mathcal{N}_{n,1}\sigma_1 }- \frac{\phi\big(\frac{j-kp_3}{\sigma_3} \big)}{\mathcal{N}_{n,3}\sigma_3 }\bigg|\\
&\qquad +  \sum_{j \in \caly_\eta\cap J_n(K)^c} \bigg|\frac{\phi\big(\frac{j-\eta-kp_1}{\sigma_1} \big)}{\mathcal{N}_{n,1}\sigma_1 }- \frac{\phi\big(\frac{j-kp_3}{\sigma_3} \big)}{\mathcal{N}_{n,3}\sigma_3 }\bigg|\\
&\qquad +  \sum_{j\in \calx_k - (\calx_k +\eta)} \PP(Z_3=j) +  \sum_{j\in (\calx_k +\eta)-\calx_k} \PP(Z_1=j)\\
&=: T_1 + T_2 + T_3 +T_4.    
\end{align*}     
We next show how to estimate $T_3$.  The term $T_4$ can be done analogously, so we omit those details.  Observe that if $\eta >0$ and $j\in \calx_k - (\calx_k + \eta)$, then $j \leq \eta-1$.  Also if $\eta \leq 0$ and $j \in \calx_k -(\calx_k + \eta)$, then $j \geq k+\eta+1$.  Now since $\eta=o(\sqrt{k})$
\begin{align*}
T_3 &\leq \sum_{j=0}^{|\eta|} \PP(Z_3=j) + \sum_{j=k+1-|\eta|}^k \PP(Z_3 = j)\\
& \lesssim \frac{(| \eta|+ 1)}{\sigma_3} \bigg(\phi\bigg( \frac{|\eta|-kp_3}{\sigma_3}\bigg)+ \phi\bigg( \frac{k+1 -|\eta| - kp_3}{\sigma_3}\bigg) \bigg) \lesssim e^{-\epsilon' n}
\end{align*}   
for some constant $ \epsilon' >0$ independent of $n$.    

We next estimate $T_2$.  Note that, by using integral comparison and that $\ell_i= n/2 + O(n^{1/2})$ and $\eta=o(n^{1/2})$, it follows that there exists a constant $C>0$ independent of $K,n$ such that for all $K$ and $n$ large enough
\begin{align*} 
T_2 \leq C \int_{-\infty}^{-K/2} \phi(x) \, dx + C \int_{K/2}^\infty \phi(x) \, dx.   
\end{align*}
Thus pick $K>0$ large enough so that $T_2<\epsilon/2$ for all $n$ large enough.  

Turning finally to $T_1$, first note that since $\eta=X_0-Y_0=o(n^{1/2})$, $X_0=n/2+ O(n^{1/2})$ and $Y_0= n/2+O(n^{1/2})$
\begin{align*}
\frac{1}{\sigma_1}- \frac{1}{\sigma_3}&= \frac{\eta + \frac{Y_0^2-X_0^2}{n}}{\sqrt{X_0 Y_0(1-X_0/n)(1-Y_0/n)(1-\frac{k}{n})}} \frac{\sqrt{n/k}}{\sqrt{X_0(1-X_0/n)}+ \sqrt{Y_0(1-Y_0/n)}}\\
&= o(n^{-1}).   
\end{align*}
Thus combining this with \cref{lem:normalization} produces 
\begin{align*}
T_1 &= \sum_{j \in \caly_\eta\cap J_n(K)} \bigg|\frac{\phi\big(\frac{j-\eta-k p_1}{\sigma_1} \big)}{\sigma_1 }- \frac{\phi\big(\frac{j-k p_3}{\sigma_3} \big)}{\sigma_3 }\bigg| + O(n^{-1/2})\\
&= \sum_{j \in \caly_\eta\cap J_n(K)} \bigg|\frac{\phi\big(\frac{j-\eta-kp_1}{\sigma_1} \big)}{\sigma_1 }- \frac{\phi\big(\frac{j-kp_3}{\sigma_3} \big)}{\sigma_1 }\bigg| + o(1)\\
&=: T_1' + o(1).\end{align*}
Note that for $j\in \calx_k \cap J_n(K)$, both $(j-\eta-kp_1)/\sigma_1$ and $(j-kp_3)/\sigma_3$ are bounded in $n$.  Hence for $T_1'$ we may write for some constant $C>0$ 
\begin{align*}
T_1' &=  \sum_{j \in \caly_\eta\cap J_n(K)} \bigg|\frac{\phi\big(\frac{j-\eta-kp_1}{\sigma_1} \big)}{\sigma_1 }- \frac{\phi\big(\frac{j-kp_3}{\sigma_3} \big)}{\sigma_1 }\bigg| \\
&= \frac{1}{\sqrt{2\pi \sigma_1^2}} \sum_{j\in \caly_\eta \cap J_n(K)} e^{-\frac{(j-\eta-k p_1)^2}{2\sigma_1^2}}\bigg| 1- e^{- \big(\frac{(j-kp_3)^2}{2\sigma_3^2}-\frac{(j-\eta-kp_1)^2}{2\sigma_1^2}\big) }\bigg| \\
& \leq  \frac{C}{\sqrt{2\pi \sigma_1^2}}  \sum_{j\in \caly_\eta \cap J_n(K)} e^{-\frac{(j-\eta-kp_1)^2}{2\sigma_1^2}}\bigg| \frac{j-kp_3}{\sigma_3}- \frac{j-\eta - kp_1}{\sigma_1}\bigg|\\
& =   \frac{C}{\sqrt{2\pi \sigma_1^2}}  \sum_{j\in \caly_\eta \cap J_n(K)} e^{-\frac{(j-\eta-k p_1)^2}{2\sigma_1^2}}\bigg| \frac{j-k p_3}{\sigma_1}- \frac{j-\eta - k p_1}{\sigma_1}\bigg| + O(n^{-1/2})\\
&\leq  \frac{C}{\sqrt{2\pi \sigma_1^2}}  \sum_{j\in \caly_\eta \cap J_n(K)} e^{-\frac{(j-\eta-k p_1)^2}{2\sigma_1^2}} \frac{2\eta}{\sigma_1} + O(n^{-1/2})= o(1)
\end{align*}
as $\eta= o(n^{1/2})$ and $\sigma= O(n^{1/2})$.  This completes the proof since by choosing $N=N(K)>0$ large enough we have 
\begin{align}   
\TV{\mu_{X_1} - \mu_{Y_1} } < \epsilon
\end{align}
for all $n\geq N$.  

\end{proof}

\subsection{Upper bound}

We now use the previous estimates to conclude the upper bound in~\eqref{eqn:upperb}.

\begin{Theorem}
Let $\epsilon >0$.  For the $(n,k)$-Bernoulli-Laplace model under \cref{assump:1}, we have 
\[t_{mix}(\epsilon)\leq \frac{\log(n)}{2|\log(1-2\lambda)|} + 3\lambda^{-1}\log\log n +1\] for large enough $n$. 
\end{Theorem}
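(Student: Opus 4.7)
The plan is to assemble \cref{Lemma:Orootn}, \cref{Lemma:orootn}, \cref{prop:returntomean}, and the one-step estimate furnished by \cref{hyper} and \cref{normal}, using the strong Markov property applied to two coupled copies of the chain. Fix $\epsilon>0$ and constants $\kappa_1,\kappa_3,\kappa_4>0$ to be chosen at the end, and let $(X_t),(Y_t)$ be two copies of the $(n,k)$-Bernoulli-Laplace chain starting from arbitrary $X_0,Y_0\in\calx$ and coupled at every step as in \cref{coupling}. Let $\tau_1=\tau_1(\kappa_1)$ be as in \cref{Lemma:Orootn}; apply the strong Markov property at $\tau_1$ together with \cref{Lemma:orootn} (taking $\kappa_2=\kappa_1$) to define $\tau_3$ as the first time after $\tau_1$ at which the conclusion of \cref{Lemma:orootn} holds; similarly apply the strong Markov property at $\tau_3$ together with \cref{prop:returntomean} to define $\tau_4$. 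Because $\{\tau_4>t_n+3s_n\}\subseteq\{\tau_1>t_n\}\cup\{\tau_3-\tau_1>s_n\}\cup\{\tau_4-\tau_3>2s_n\}$, a union bound over these three events yields
\[
\PP(\tau_4>t_n+3s_n)\;\lesssim\;\frac{1}{\kappa_1^2}+\frac{1}{\kappa_3^2}+\frac{1}{\kappa_4^2}.
\]

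Next, set $T=t_n+3s_n+1$. For any joint law on $\calx\times\calx$ of $(U,V)$,
\[
\|\mu_U-\mu_V\|_{TV}=\sup_B|\E[P_1(U,B)-P_1(V,B)]|\leq \E\bigl[\|P_1(U,\cdot)-P_1(V,\cdot)\|_{TV}\bigr],
\]
so applying this with $(U,V)=(X_{T-1},Y_{T-1})$ and splitting on the event $A=\{\tau_4\leq T-1\}$ gives
\[
\|\mu_{X_T}-\mu_{Y_T}\|_{TV}\;\leq\;\E\bigl[\|P_1(X_{T-1},\cdot)-P_1(Y_{T-1},\cdot)\|_{TV}\,\indFn{A}\bigr]+\PP(A^c).
\]
On $A$, the pair $(X_{T-1},Y_{T-1})$ satisfies $|X_{T-1}-Y_{T-1}|\leq\sqrt{n}/\log\log n=o(\sqrt{n})$ and $X_{T-1},Y_{T-1}=n/2+O(\sqrt{n})$, so \cref{hyper} and \cref{normal} combined with the triangle-inequality decomposition displayed in \cref{sec:upperbound} give that the conditional one-step TV distance is $o(1)$ as $n\to\infty$. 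Because $\kappa_4$ is a fixed constant, the implicit constants in the asymptotic assumptions are deterministic, and consequently this $o(1)$ estimate is uniform over realizations of $(X_{T-1},Y_{T-1})\in A$. Putting the pieces together,
\[
\|\mu_{X_T}-\mu_{Y_T}\|_{TV}\;\leq\;o(1)\;+\;C\Big(\tfrac{1}{\kappa_1^2}+\tfrac{1}{\kappa_3^2}+\tfrac{1}{\kappa_4^2}\Big).
\]

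To conclude, choose $\kappa_1,\kappa_3,\kappa_4$ large enough that the constant term is at most $\epsilon/2$, and then $N(\epsilon)$ large enough that the $o(1)$ contribution is at most $\epsilon/2$ for $n\geq N(\epsilon)$. Since $X_0,Y_0\in\calx$ were arbitrary, taking $Y_0\sim\pi$ (so $Y_T\sim\pi$) yields $\|P_T(X_0,\cdot)-\pi\|_{TV}<\epsilon$ for every $X_0$ and every $n\geq N(\epsilon)$, whence $d^{(n)}(T)\leq\epsilon$ and $t_{\text{mix}}^{(n)}(\epsilon)\leq T=t_n+3s_n+1$, which is the desired upper bound. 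The main obstacle to make rigorous is the uniformity of the one-step estimate from \cref{normal} on the random event $A$; this is what the observation about the fixed, deterministic nature of the parameters defining $A$ is designed to resolve, and no substantially new calculation beyond the earlier sections is needed.
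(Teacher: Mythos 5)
Your proof mirrors the paper's overall architecture closely: Coupling~1 for the chains, iterated strong Markov property at nested stopping times, a union bound over the three time windows to get $\PP(\tau_4 > t_n + 3s_n) \lesssim \kappa_1^{-2} + \kappa_3^{-2} + \kappa_4^{-2}$, then the one-step total-variation estimate from \cref{hyper} and \cref{normal}. However, there is a genuine gap in the final step that is not merely about uniformity of the $o(1)$ bound.

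You apply the one-step estimate at the \emph{deterministic} time $T-1$, and on the event $A = \{\tau_4 \leq T-1\}$ you claim that $(X_{T-1}, Y_{T-1})$ satisfies both $|X_{T-1}-Y_{T-1}| \leq \sqrt{n}/\log\log n$ and $X_{T-1}, Y_{T-1} = n/2 + O(\sqrt{n})$. The first of these is indeed preserved by the monotonicity of \cref{coupling}. But the second is not: $\tau_4$ is the \emph{first} time the chains land in the $\kappa_4\sqrt{n}$-band, and there is nothing that pins them there afterward. Between $\tau_4$ and $T-1$ the chains can (and will, with nonnegligible probability) drift out of the $O(\sqrt{n})$-band — indeed \cref{prop:step1} only controls the excursion over $s_n$ steps to the wider band of width $\sqrt{n}(\log n)^{|P_\lambda|/2}$. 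So the hypotheses of \cref{normal} are not available at time $T-1$ on all of $A$, and the argument as written does not close. The paper instead applies the one-step estimate at the random time $\tau_4 + 1$ via the strong Markov property (where the hypotheses of \cref{normal} hold by definition of $\tau_4$), and then — crucially — it switches to the optimal one-step coupling for $t \geq \tau_4$, which has the property that once the chains are equal they remain equal. This gives the inclusion $\{|X_T - Y_T| \geq 1,\ \tau_4 \leq T-1\} \subseteq \{|X_{\tau_4+1} - Y_{\tau_4+1}| \geq 1,\ \tau_4 \leq T-1\}$, and therefore $\PP(|X_T - Y_T| \geq 1,\ \tau_4 \leq T-1) \leq \E\,\PP(|X_1 - Y_1| \geq 1 \mid X_{\tau_4}, Y_{\tau_4}) = o(1)$. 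Your fix should follow the same route: apply the one-step comparison at $\tau_4 + 1$, not at $T-1$, and ensure the post-$\tau_4$ coupling keeps the chains together once they coalesce.
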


In order to setup the proof, we recall the definitions of $\tau_1(\kappa_1), \tau_3(\kappa_3)$ and $\tau_4(\kappa_4)$:
\begin{align*}
\tau_1(\kappa_1)&:=\min \left\{t\,:\, X_t,Y_t\in (\tfrac{n}{2}-\kappa_1\sqrt{n},\tfrac{n}{2}+\kappa_1\sqrt{n}) \right\},\\
\tau_3(\kappa_3)&:= \min \left\{ t:|X_t-Y_t|\leq \frac{\sqrt{n}}{\log \log n}\quad\text{ and }\quad X_t,Y_t\in \left(\tfrac{n}{2}-\kappa_3r_n,\tfrac{n}{2}+\kappa_3r_n\right )\right\},\\
\tau_4(\kappa_4)&:= \min \left\{ t:|X_t-Y_t|\leq \frac{\sqrt{n}}{\log \log n}\,\,\,\text{ and }\,\,\, X_t,Y_t\in \left(\tfrac{n}{2}-\kappa_4\sqrt{n},\tfrac{n}{2}+\kappa_4\sqrt{n}\right )\right\},
\end{align*}
where $r_n =\sqrt{n}(\log n)^{|P_\lambda|/2}$.

\begin{proof}
%By \cref{prop:couple}, after $t_0=\frac{\log n}{2\log(1-2\lambda)^{-1}}$ steps, $|X_{t_0}-Y_{t_0}| \leq \kappa_1 \sqrt{n}$ with high probability. 
%By \cref{Lemma:orootn}, with high probability, there exists $t\leq t_0+ t_1$ such that $|X_{t}-Y_{t}|\leq \sqrt{n}/\log\log n$ and \[X_{t}, Y_{t}\in \left(\tfrac{n}{2}-\kappa_3 \sqrt{n}(\log n)^{|P_\lambda|/2},\tfrac{n}{2}+\kappa_3 \sqrt{n}(\log n)^{|P_\lambda|/2}\right)\]
%for some $\kappa_3$.
%By \cref{prop:returntomean}, with high probability, there exists $t\leq t_0+t_1+t_2$ such that $|X_{t}-Y_{t}|\leq \frac{\sqrt{n}}{\log\log n}$ and \[X_{t},Y_{t} \in \left(\tfrac{n}{2}-\kappa_4\sqrt{n},\tfrac{n}{2}+\kappa_4\sqrt{n}\right)\]
%for some $\kappa_4$ large enough.
Below, for simplicity of expression, we will suppress the $\kappa_i$'s in $\tau_i(\kappa_i)$.  For $t< \tau_4$, we couple the two chains $(X_t)$ and $(Y_t)$ according to \cref{coupling}.  For $t\geq \tau_4$, we pick the optimal coupling; that is, the coupling $(X_t, Y_t)$ of $\mu_{X_t}$ and $\mu_{Y_t}$ such that $\TV{\mu_{X_t}- \mu_{Y_t}} = \PP\{X_t \neq Y_t\}=\PP\{ |X_t - Y_t | \geq 1 \}$.    
Note by \cref{Lemma:Orootn} and the Strong Markov Property:  
\begin{align*}
    \PP(\tau_4 >t_n+3s_n \,|\,X_0,Y_0)&=\PP(\tau_4 >t_n+3s_n, \tau_1> t_n \,|\,X_0,Y_0)+\PP(\tau_4 >t_n+3s_n, \tau_1\leq  t_n \,|\,X_0,Y_0)\\
    &\lesssim \frac{1}{\kappa_1^2} + \PP(\tau_4 >t_n+3s_n, \tau_1\leq  t_n|X_0,Y_0)\\
    &= \frac{1}{\kappa_1^2} + \E\E(\textbf{1}\{\tau_4 >t_n+3s_n\}\textbf{1}\{ \tau_1\leq  t_n\}|\mathcal F_{\tau_1})\\
    &= \frac{1}{\kappa_1^2} + \E(\textbf{1}\{\tau_1\leq  t_n\}\PP (\tau_4 >t_n+3s_n|\mathcal F_{\tau_1}))\\
    &\leq \frac{1}{\kappa_1^2} + \E\PP (\tau_4 >3s_n\,|\, X_{\tau_1},Y_{\tau_1}).
\end{align*}
Continuing in this way, we obtain by \cref{Lemma:orootn}
\begin{align*}
     \E \PP (\tau_4 >3s_n\,|\,X_{\tau_1},Y_{\tau_1})&= \E \PP (\tau_4 >3s_n, \tau_3>s_n \,|\,X_{\tau_1},Y_{\tau_1})+ \E\PP (\tau_4 >3s_n,\tau_3\leq s_n\,|\,X_{\tau_1},Y_{\tau_1})\\
    &\lesssim \frac{1}{\kappa_3^2} + \E\PP (\tau_4 >3s_n,\tau_3\leq s_n\,|\,X_{\tau_1},Y_{\tau_1})\\
    &= \frac{1}{\kappa_3^2} + \E\E (\textbf{1}\{\tau_4 >3s_n\}\textbf{1}\{\tau_3\leq s_n\}\,|\,\mathcal{F}_{\tau_3})\\
    &= \frac{1}{\kappa_3^2} + \E(\textbf{1}\{\tau_3\leq s_n\}\PP(\tau_4> 3s_n| \mathcal F_{\tau_3}))\\
    &\leq \frac{1}{\kappa_3^2} + \E\PP(\tau_4>2s_n| X_{\tau_3}, Y_{\tau_3})\lesssim \frac{1}{\kappa_3^2}+\frac{1}{\kappa_4^3}.
\end{align*}
Combining the above with \cref{normal} and setting $t= t_n+ 3s_n + 1$ gives
\begin{align*}
\TV{\mu_{X_t}- \mu_{Y_t} }& \leq \PP ( |X_t - Y_t | \geq 1 ) \\
&= \PP ( |X_t - Y_t | \geq 1, \, \tau_4 > t_n + 3s_n ) + \PP ( |X_t - Y_t | \geq 1, \, \tau_4 \leq t_n + 3s_n )\\
& \lesssim \frac{1}{\kappa_1^2} + \frac{1}{\kappa_3^2}+ \frac{1}{\kappa_4^2} +  \PP ( |X_t - Y_t | \geq 1, \, \tau_4 \leq t_n + 3s_n )\\
&\leq \frac{1}{\kappa_1^2} + \frac{1}{\kappa_3^2}+ \frac{1}{\kappa_4^2} + \E \PP ( |X_1 - Y_1| \geq 1 \, | \, X_{\tau_4}, Y_{\tau_4} )\\
&=\frac{1}{\kappa_1^2} + \frac{1}{\kappa_3^2}+ \frac{1}{\kappa_4^2} + o(1). \end{align*}
Note that picking the $\kappa_i$'s and $n$ large enough finishes the proof. 
\end{proof}

\section{Proof of the auxiliary results}
\label{sec:aux}
In this section, we prove \cref{lem:1} and \cref{lem:2}.   

\begin{proof}[Proof of~\cref{lem:1}]
Let $(\mathcal{F}_t)_{t\geq 0}$ denote a filtration of $\sigma$-fields to which $(X_t)$ is adapted. We start with the proof of~\eqref{id:1} and~\eqref{id:2}.  These identities are contained in the proof of \cite[Lemma 3]{EN}, but we provide the details for completeness.  For~\eqref{id:1}, since $f_1(x)$ is an eigenfunction with eigenvalue $f_1(k)=1-2k/n$ (cf.~\cite{DS_87, EN}), we obtain for $t\geq 1$
\begin{align*}
\E_{X_0} f_1(X_t) = \E_{X_0} \E_{X_0}[ f_1(X_t) | \mathcal{F}_{t-1}] = \E_{X_0} \E_{X_{t-1}}[ f_1(X_1)] = (1-\tfrac{2k}{n}) \E_{X_0} f_1(X_{t-1}).  
\end{align*}   
Repeating the process above produces~\eqref{id:1}.  In order to obtain~\eqref{id:2}, by a direct calculation it follows that
\begin{align*}
f_1(x)^2= \frac{1}{2n-1} + \frac{2n-2}{2n-1} f_2(x).  
\end{align*}
Thus following the same eigenvalue procedure as above but $f_2$ playing the role of $f_1$ we obtain~\eqref{id:2}.  

In order to establish~\eqref{eqn:f2as}, let $q\geq 0$ and $X_0= \tfrac{n}{2}+ \zeta$ where $\zeta$ is $O(\sqrt{n} (\log n)^{q/2})$.  Then
\begin{align*}
f_2(X_0) &= \bigg(1+ \frac{2n-1}{2n-2} - \frac{2n-1}{n}\bigg) + \zeta\bigg(\frac{2(2n-1)}{n(n-1)}- \frac{2(2n-1)}{n^2} \bigg) + \zeta^2\frac{2(2n-1)}{n^2(n-1)} -  \frac{2n-1}{n(n-1)}\\
&\qquad \qquad - \zeta \frac{2(2n-1)}{n^2(n-1)}\\
&= \zeta^2\frac{2(2n-1)}{n^2(n-1)} +O(n^{-1}) = O(n^{-1} (\log n)^q),
\end{align*}  
as claimed.  
\end{proof}

\begin{proof}[Proof of~\cref{lem:2}]
We first prove~\eqref{as:id1}.  Letting $g: (-\infty, 1/2)\rightarrow \mathbf{R}$ be given by $g(x)= \log (1-2x)$, observe that 
\begin{align}
\label{eqn:as1}
f_1(k)^t = (1-2\lambda)^t \frac{(1-2k/n)^t}{(1-2\lambda)^t} &=(1-2\lambda)^t \exp ( t[g(k/n) - g(\lambda)])\\
\nonumber &=: (1-2\lambda)^t \exp( t \Delta(g)).  
\end{align} 
Taylor's formula then implies
\begin{align*}
\Delta(g)=g(k/n) -g(\lambda) = -\frac{2\Delta_n}{1-2\lambda} + \Delta_n^2 \frac{g''(\xi)}{2} 
\end{align*} 
for some $\xi $ in between $k/n$ and $\lambda$.  Employing \cref{assump:1} (c1) then gives
\begin{align*}
 \Delta_n^2\frac{g''(\xi)}{2!} = O( \Delta_n^2) .
 \end{align*} 
 Plugging this into~\eqref{eqn:as1} using $t\leq t_n$ and \cref{assump:1}(c2) then implies
 \begin{align*}
f_1(k)^t &= (1-2\lambda)^t \sum_{j=0}^\infty \frac{(t \Delta(g))^j}{j!}\\
 &= (1-2\lambda)^t(1- 2t \Delta_n/(1-2\lambda) + O(t^2 \Delta_n^2)),
 \end{align*}
 which is~\eqref{as:id1}.  
 
 In order to obtain~\eqref{as:id2}, we follow a similar reasoning to the one used to arrive at~\eqref{as:id1} to see that
 \begin{align*}
 f_2(k)^t = (1-2\lambda)^{2t} \frac{f_2(k)^t}{(1-2\lambda)^{2t}} = (1- 2\lambda)^{2t} \exp\{t (g(h_1(k))- g(h_2(\lambda )))\}. 
 \end{align*}   
A short calculation shows that
 \begin{align*}
 \Delta_n'= h_1(k)-h_2(\lambda) =2 \Delta_n (1-\tfrac{k}{n}- \lambda) +O(1/n),  
 \end{align*} 
 yielding the first part of~\eqref{eqn:dtil}.  Following the same reasoning as above yields~\eqref{as:id2}. 
 
 The proof of~\eqref{as:id3} and the second part of~\eqref{eqn:dtil} are similar, so we omit the details.  
 
\end{proof}

\bibliographystyle{plainurl}
\bibliography{bibliography}
%\bibitem{BD_97} R. Bubley and M. Dyer.  Path coupling: A technique for proving rapid mixing in Markov chains.  In \emph{Proceedings of the 38th Annual Symposium on Foundations of Computer Science} (1997), pp. 223-232
%
%\bibitem{DS_87} P. Diaconis and M. Shahshahani.  Time to reach stationarity in the Bernoilli-Laplace diffusion model.  \emph{SIAM J. Math. Anal.} \textbf{18} (1) (1987), pp. 208--218. 
%
%\bibitem{DLS_94} P. Donnelly, P. Lloyd, and A. Sudbury.  Approach to stationarity of the Bernoulli-Laplace diffusion model.  \emph{Adv. in Appl. Probab.} \textbf{26} (3) (1994), pp.715--727.
%
%\bibitem{EN} The original paper
%
%\bibitem{stat} The Stats paper
%
%\bibitem{Hoeffding_1963} W. Hoeffding, Probability inequalities for sums of bounded random variables, \emph{Journal of the American Statistical Association} \textbf{58} (301): 13-20, doi:10.2307/2282952.
%
%
%\bibitem{NW_19} E. Nestoridi and G. White.  Shuffling large decks of cards and the Bernoulli-Laplace urn model.  \emph{J. Theoret. Probab.} \textbf{32} (1) (2019), pp.417--446.  
%
%\bibitem{DF_1980}
%
%\bibitem{Tai_96} D. Ta\"{i}bi.  Une g\'{e}n\'{e}ralisation du mod\`{e}le de diffusion de Bernoilli--Laplace.  \emph{J. Appl. Probab.} \textbf{33} (3) (1996), pp. 688--697.  

%\bibitem{BBFHHSVV} F. Barioli, W. Barrett, S. M. Fallat, H. T. Hall, L. Hogben, B. Shader, P. van den Driessche, H. van der Holst. Zero forcing parameters and minimum rank problems. \textit{Linear Algebra Appl.}, 433 (2010), 401--411.

%\end{thebibliography}

\end{document}